\theoremstyle{plain}
\newtheorem{Thm}{Theorem}[section]
\newtheorem{Lem}[Thm]{Lemma}
\newtheorem{Prop}[Thm]{Proposition}
\newtheorem{Cor}[Thm]{Corollary}
\theoremstyle{definition}
\newtheorem{Def}[Thm]{Definition}
\theoremstyle{remark}
\newcommand{\B}{{\mathcal L}}
\newcommand{\C}{\mathbb{C}}
\newcommand{\cH}{{\mathcal H}}
\newcommand{\cK}{{\mathcal K}}
\newcommand{\M}{\mathbb{M}}
\newcommand{\N}{\mathbb{N}}
\newcommand{\imp}{\Rightarrow}
\newcommand{\limind}{\lim_{\rightarrow}}
\newcommand{\Hi}{\mathcal{H}}
\renewcommand{\L}{\mathcal{L}}
\newcommand{\pf}{\noindent{\mbox{\textbf{Proof}.\,}} }
\newcommand{\cst}{C*}
\theoremstyle{definition}
\newtheorem{definition}[Thm]{Definition}
\newtheorem{rem}[Thm]{Remark}
\newtheorem{rems}[Thm]{Remarks}
\theoremstyle{remark}
\begin{document}
\title[Irreducible Representations]{Irreducible Representations of Inner Quasidiagonal C*-Algebras}
\author{Bruce Blackadar and Eberhard Kirchberg}
\address{Department of Mathematics/084 \\ University of Nevada, Reno \\ Reno, NV 89557, USA \\ Institut f{\"u}r Mathematik,
Humboldt Universit{\"a}t zu Berlin \\
Unter den Linden 6,
D-10099 Berlin, Germany}
\email{bruceb@unr.edu\,,\ kirchbrg@mathematik.hu-berlin.de}
\thanks{This work was completed while both authors were visitors at the Fields Institute, Toronto, Canada.}

\date{November 29, 2007}

\maketitle
\begin{abstract}
It is shown that a separable C*-algebra is inner quasidiagonal if and only if it has a separating family of
quasidiagonal irreducible representations.  As a consequence, a separable C*-algebra is a strong NF
algebra if and only if it is nuclear and has a separating family of quasidiagonal irreducible representations.
We also obtain some permanence properties of the class of inner quasidiagonal C*-algebras.
\end{abstract}

\section{Introduction}

This article is the long-delayed third installment in the authors' study of generalized inductive limits of
finite-dimensional C*-algebras.

The basic study of such generalized inductive limits was begun in \cite{BlackadarKGeneralized}, where the classes of MF algebras,
NF algebras, and strong NF algebras were defined, and a number of equivalent characterizations of each class given.
In particular, a (necessarily separable) C*-algebra is a {\em strong NF algebra} if it can be written as a
generalized inductive limit of a sequential inductive system of finite-dimensional C*-algebras in which the connecting
maps are complete order embeddings (and asymptotically multiplicative in the sense of \cite{BlackadarKGeneralized}).  An {\em NF algebra}
is a C*-algebra which can be written as the generalized inductive limit of such a system, where the connecting maps
are only required to be completely positive contractions.  An NF algebra is automatically nuclear (and separable).
It was shown that a separable C*-algebra is an NF algebra if and only if it is nuclear and quasidiagonal.

It was not shown in \cite{BlackadarKGeneralized} that the classes of NF algebras and strong NF algebras are distinct.  Our second
paper \cite{BlackadarKInner} used the notion of inner quasidiagonality to distinguish them.  We made the following definition,
a slight variation of Voiculescu's characterization of quasidiagonal C*-algebras \cite{VoiculescuNote}:

\begin{Def}
A C*-algebra $A$ is {\em inner quasidiagonal} if, for every $x_1,\dots,x_n\in A$ and $\epsilon>0$, there is a
representation $\pi$ of $A$ on a Hilbert space $\cH$, and a finite-rank projection $P\in\pi(A)''$ such that
$\|P\pi(x_j)-\pi(x_j)P\|<\epsilon$ $\|P\pi(x_j)P\|>\|x_j\|-\epsilon$ for $1\leq j\leq n$.
\end{Def}

Voiculescu's characterization of quasidiagonality is the same with the requirement that $P\in\pi(A)''$ deleted.
We then proved that a separable C*-algebra is a strong NF algebra if and only if it is nuclear and inner quasidiagonal.

The principal shortcoming of this result is that it is often difficult to determine directly from the definition whether a C*-algebra
is inner quasidiagonal, although we were able to give examples of separable nuclear C*-algebras which are quasidiagonal but
not inner quasidiagonal, hence of NF algebras which are not strong NF.  It is immediate from the definition that a
C*-algebra with a separating family of quasidiagonal irreducible representations is inner quasidiagonal, and in \cite{BlackadarKInner}
we established some very special cases of the converse which were sufficient to yield the examples.

The main result of the present article is the full converse in the separable case:

\begin{Thm}\label{MThm}
A separable C*-algebra is inner quasidiagonal if and only if it has a separating family of quasidiagonal irreducible representations.
\end{Thm}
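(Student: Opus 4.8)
The forward implication ($\Leftarrow$) is the easy one noted above, so the work lies in showing that a separable inner quasidiagonal $A$ admits a separating family of quasidiagonal irreducible representations. By separability, fix a dense sequence $(a_m)$ in $A$. It suffices to produce, for each $m$ and each $\epsilon>0$, a single \emph{quasidiagonal irreducible} representation $\pi$ with $\|\pi(a_m)\|>\|a_m\|-\epsilon$: the countable collection of all such representations is then separating, since for any nonzero $x$ one may choose $a_m$ close to $x$ and $\epsilon$ small, whence the corresponding $\pi$ does not annihilate $x$. Thus the theorem reduces to a single existence statement, and the real content is the construction of one (typically infinite-dimensional) quasidiagonal irreducible representation detecting a prescribed element.

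To build such a representation I would exploit the bicommutant condition in the definition of inner quasidiagonality, which is exactly what ordinary quasidiagonality lacks. Given a large finite set $F\ni a_m$ and a small $\epsilon$, inner quasidiagonality provides a representation $\pi$ on $\cH$ and a finite-rank projection $P\in\pi(A)''$ with $\|[P,\pi(x)]\|<\epsilon$ and $\|P\pi(x)P\|>\|x\|-\epsilon$ for $x\in F$. Writing $M=\pi(A)''$, the reduced von Neumann algebra $N=PMP$ acts on the finite-dimensional space $P\cH$ and is therefore a finite direct sum of matrix algebras. Because $P\in M$, compression is weak-$*$ continuous and the von Neumann density theorem makes $a\mapsto P\pi(a)P$ weak-$*$ dense in $N$; the estimate $\|(1-P)\pi(b)P\|\le\|[\pi(b),P]\|<\epsilon$ then shows this map is approximately multiplicative, approximately $*$-preserving, and norm-nearly-isometric on $F$. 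In this way inner quasidiagonality furnishes finite-dimensional compressions that are compatible with the central/factor decomposition of $N$.

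The heart of the proof is to pass from these reducible finite-dimensional approximations to a genuine irreducible representation that remains quasidiagonal. The plan is to cut $N$ by a minimal central projection, isolating a single matrix block in which $\|P\pi(a_m)P\|$ is (nearly) attained, and to select a unit vector $\xi$ whose vector state $\omega_\xi$ nearly detects $\|a_m\|$ and is, up to the errors, supported on that block. Iterating over an increasing dense sequence $F_1\subset F_2\subset\cdots$ with $\epsilon_k\to 0$ produces a sequence of such vector states together with nested finite-rank projections coming from the compressions; passing to a weak-$*$ convergent subsequence yields a limit state $\phi$, and a diagonalization in the spirit of excision of states is arranged so that $\phi$ is \emph{pure} and its GNS representation $\pi_\phi$ inherits an increasing sequence of asymptotically central finite-rank projections from the compressions. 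Purity of $\phi$ gives irreducibility of $\pi_\phi$, the inherited projections give quasidiagonality, and the inequality $\|\pi_\phi(a_m)\|\ge\phi(a_m^*a_m)^{1/2}$, kept close to $\|a_m\|$ by the near-maximizing choice of $\xi$, gives norm detection.

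The main obstacle is precisely this last step: securing \emph{simultaneously} purity of the limit state (hence irreducibility) and the persistence of finite-rank asymptotically central projections (hence quasidiagonality), without sacrificing norm detection. Controlling purity requires choosing the vectors $\xi_k$ sufficiently extreme within their blocks and invoking excision so that the limiting GNS representation is genuinely irreducible rather than merely a factor or reducible representation; controlling quasidiagonality requires that the projections transported to $\cH_\phi$ still increase to the identity and asymptotically commute with all of $\pi_\phi(A)$, which is exactly where the hypothesis $P\in\pi(A)''$ is indispensable. Balancing these competing requirements through a single careful diagonalization is, I expect, the crux of the argument.
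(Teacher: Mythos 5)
Your opening reduction---producing, for each $a_m$ and $\epsilon>0$, one quasidiagonal irreducible representation with $\|\pi(a_m)\|>\|a_m\|-\epsilon$---is correct, and your second paragraph is essentially the paper's Lemma \ref{fl1}: cutting the finite-rank projection $P\in\pi(A)''$ by a minimal central projection of $\pi(A)''$ isolates a single matrix block and yields an approximately multiplicative \emph{pure matricial state}, i.e.\ a compression $a\mapsto P_j\rho(a)P_j$ of a single irreducible representation $\rho$. But the limiting procedure you then propose has a genuine gap, in two places. First, a weak-$*$ limit of vector states of pure matricial states need not be pure, and neither ``choosing $\xi_k$ sufficiently extreme'' nor excision supplies an actual mechanism forcing purity of the limit; you have correctly identified this as the crux but not resolved it. Second, and more fundamentally, the finite-rank projections arising at different stages of your iteration live in compressions of \emph{different} irreducible representations on different Hilbert spaces; for them to be ``inherited'' by a single GNS representation the successive compressions must be coherent, i.e.\ one needs isometries $I_n:\C^{m_n}\to\C^{m_{n+1}}$ with $I_n^*V_{n+1}(\cdot)I_n$ close to $V_n(\cdot)$ on a growing finite set. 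Nothing in your outline produces such isometries, and this compatibility is the hardest part of the theorem: it is the paper's Lemma \ref{fl2}, which rests on Lemma \ref{lem:A-to-Bmu} (approximation of a pure matricial $n$-state through any separating family of homomorphisms, proved by passing to $A\otimes\M_n$ and perturbing pure states) applied to the quotient $\prod_k\M_{m_k}/\bigoplus_k\M_{m_k}$.

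The paper in fact avoids limits of states altogether: it builds the tower $(V_n,I_n)$ directly, forms the inductive limit Hilbert space $\cH=\limind(\cH_n,I_n)$, and defines $\pi(x)$ as the strong limit of $J_nV_n(x)J_n^*$. Irreducibility is then obtained not from purity of any state but from an extra density condition---condition (iii), that $V_n(X_{n+1})$ is $2^{-n-2}$-dense in the unit ball of $\B(\cH_n)$---which is available because a pure matricial state maps the closed unit ball of $A$ \emph{onto} the closed unit ball of $\M_{m_n}$ (Kadison transitivity). This supplies approximate unitaries in $\pi(A)$ moving any vector of $\cH$ close to any other, hence irreducibility; quasidiagonality and norm detection then come from the projections $P_n=J_nJ_n^*$ and the first compression. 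To salvage your approach you would need to prove the compatibility lemma for successive pure matricial states and replace ``purity of the limit state'' by an explicit transitivity condition, at which point you would essentially be reconstructing the paper's argument.
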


We thus obtain a characterization of strong NF algebras which is usually much easier to check than the characterization of \cite{BlackadarKInner}:

\begin{Cor}
A separable C*-algebra is a strong NF algebra if and only if it is nuclear and has a separating family of quasidiagonal irreducible representations.
\end{Cor}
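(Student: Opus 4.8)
The plan is to obtain the corollary by combining Theorem~\ref{MThm} with the characterization of strong NF algebras established in \cite{BlackadarKInner} and recalled in the introduction, namely that a separable C*-algebra is a strong NF algebra if and only if it is nuclear and inner quasidiagonal. The strategy is to take this equivalence as given and then replace the condition of inner quasidiagonality by the condition of possessing a separating family of quasidiagonal irreducible representations.

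The justification for this replacement is exactly Theorem~\ref{MThm}, which asserts that for every separable C*-algebra these two conditions are equivalent. Chaining the equivalences then yields, for a separable C*-algebra $A$, that $A$ is strong NF if and only if $A$ is nuclear and inner quasidiagonal, which holds if and only if $A$ is nuclear and has a separating family of quasidiagonal irreducible representations. This is precisely the assertion of the corollary.

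The one point worth noting is that Theorem~\ref{MThm} carries no nuclearity hypothesis, so restricting attention to nuclear C*-algebras does not affect its applicability; the nuclearity condition simply passes unchanged through the substitution. Thus there is no genuine obstacle in the proof of the corollary itself---the entire substantive content is already packaged in Theorem~\ref{MThm}, whose proof constitutes the main work of the paper.
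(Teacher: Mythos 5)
Your proposal is correct and coincides with the paper's (implicit) argument: the corollary is obtained by chaining Theorem~\ref{MThm} with the result of \cite{BlackadarKInner} that a separable C*-algebra is strong NF if and only if it is nuclear and inner quasidiagonal. Nothing further is needed.
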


Although our initial interest in inner quasidiagonality was through its connection with generalized inductive limits, it now
appears that inner quasidiagonality is of some interest in its own right.  In the final section, we list some permanence properties
of the class of inner quasidiagonal C*-algebras which follow easily from our characterizations:
\begin{enumerate}
\item[1.]  An arbitrary inductive limit (with injective connecting maps) of inner quasidiagonal C*-algebras is inner quasidiagonal.
\item[2.]  A (minimal) tensor product of inner quasidiagonal C*-algebras is inner quasidiagonal.
\item[3.]  The algebra of sections of a continuous field of separable inner quasidiagonal C*-algebras is inner quasidiagonal.
\item[4.]  Inner quasidiagonality is an (SI) property in the sense of \cite[II.8.5]{BlackadarOperator}.
\end{enumerate}

As a result of (1) and (4), the study of inner quasidiagonal C*-algebras can be effectively reduced to studying separable inner
quasidiagonal C*-algebras, to which Theorem \ref{MThm} applies.

\section{Outline of the Proof}

To prove Theorem \ref{MThm}, fix a separable inner quasidiagonal C*-algebra $A$ and a nonzero $x_0\in A$ (we may assume $\|x_0\|=1$
to simplify notation).  We will
manufacture a quasidiagonal irreducible representation $\pi$ of $A$ with $\pi(x_0)\neq0$.

We will construct a sequence $(\cH_n)$ of finite-dimensional Hilbert spaces, embeddings $I_n:\cH_n\to\cH_{n+1}$,
an increasing sequence $(X_n)$ of finite self-adjoint subsets of the unit ball of $A$ containing $x_0$, with union $X$ dense in the unit ball of $A$, and completely positive contractions
$V_n:A\to\B(\cH_n)$ mapping the closed unit ball of $A$ onto the closed unit ball of $\B(\cH_n)$, such that, for all $n$:
\begin{enumerate}
\item[(i)]  $\|I_n^*V_{n+1}(x)I_n-V_n(x)\|<2^{-n-2}$ for all $x\in X_{n+1}$.
\item[(ii)]  $\|V_n(xy)-V_n(x)V_n(y)\|<2^{-n-2}$ for all $x,y\in X_n$.
\item[(iii)]  $V_n(X_{n+1})$ is $2^{-n-2}$-dense in the unit ball of $\B(\cH_n)$, i.e.\ for all $z$ in the unit ball of $\B(\cH_n)$
there is an $x\in X_{n+1}$ with $\|V_n(x)-z\|<2^{-n-2}$.
\item[(iv)] $\|V_1(x_0)\|>3/4$.
\end{enumerate}

Once this tower constructed, we proceed as follows.  Let $\cH=\limind(\cH_n,I_n)$ be the inductive limit Hilbert space, which may be
thought of as the ``union'' of the $\cH_n$.  Let $J_n$ be the natural inclusion of $\cH_n$ into $\cH$.
If $x\in X_m$, then for $n\geq m$ and $\xi,\eta\in J_n\cH_n$ we have
$$|\langle(J_{n+1}V_{n+1}(x)J_{n+1}^*-J_nV_n(x)J_n^*)\xi,\eta\rangle|$$
$$=|\langle(J_nI_n^*V_{n+1}(x)I_nJ_n^*-J_nV_n(x)J_n^*)\xi,\eta\rangle|<2^{-n-2}$$
by (i). So the sequence $(J_nV_n(x)J_n^*)$ converges weakly in $\B(\cH)$ to an operator we call $\pi(x)$.
For $\xi\in J_m\cH_m$, for $n\geq m$ we have
$$\|J_{n+1}V_{n+1}(x)J_{n+1}^*\xi\|\geq\|J_nJ_n^*J_{n+1}V_{n+1}(x)J_{n+1}^*\xi\|$$
$$=\|J_{n+1}I_n^*V_{n+1}(x)I_nJ_n^*\xi\|\geq\|J_nV_n(x)J_n^*\xi\|-2^{n-2}$$
and thus $\|\pi(x)\xi\|\geq\limsup\|J_nV_n(x)J_n^*\xi\|$.  So $J_nV_n(x)J_n^*\to\pi(x)$ strongly (cf.\ \cite[I.1.3.3]{BlackadarOperator}).
If $x,y\in X$, it follows from (ii) and joint strong continuity of multiplication on bounded sets that $\pi(xy)=\pi(x)\pi(y)$.
Since $X$ is dense in the unit ball of $A$ and each $V_n$ is a contraction, $(J_nV_n(x)J_n^*)$ converges strongly for each $x\in A$ to an operator
on $\cH$ we call $\pi(x)$, and $\pi$ is linear, completely positive, contractive, and multiplicative, hence a *-representation of $A$ on $\cH$.
For each $m\in\N$ and $x\in X_m$, we have $\|\pi(x)J_n-J_nV_n(x)\|<2^{-n}$ for all $n\geq m$.

To show that $\pi$ is irreducible, suppose $\xi,\eta,\zeta\in\cH$ are unit vectors and $\epsilon>0$.  Choose $m$ with $2^{-m}<\epsilon/4$,
and for some $n\geq m$ choose unit vectors $\tilde\xi ,\tilde\eta, \tilde\zeta \in\cH_n$ with $\|\xi-J_n\tilde\xi \|, \|\eta-J_n\tilde\eta \|, \|\zeta-J_n\tilde\zeta\|<\epsilon/4$.
There is a unitary $u\in\B(\cH_n)$ with $u\tilde\xi =\tilde\eta$.  By (iii), there is an
$x\in X_{n+1}$ with $\|V_n(x)-u\|<2^{-n-2}$.  Since $\|I_n^*V_{n+1}(x)I_n-V_n(x)\|<2^{-n-2}$, we have $\|I_n^*V_{n+1}(x)I_n-u\|<2^{-n-1}$.
By iteration, $\|J_n^*\pi(x)J_n-u\|<2^{-n}$ and hence $\|J_n^*\pi(x)J_n\tilde\xi-\tilde\eta\|<2^{-n}$.  Then
$$|\langle\pi(x)\xi-\eta,\zeta\rangle|\leq\|\zeta-J_n\tilde\zeta\|+|\langle \pi(x)\xi-\eta,J_n\tilde\zeta \rangle|=\|\zeta-J_n\tilde\zeta\|+|\langle J_n^*(\pi(x)\xi-\eta),\tilde\zeta \rangle|$$
$$\leq \|\zeta-J_n\tilde\zeta\|+\|\xi-J_n\tilde\xi\|+\|\eta-J_n\tilde\eta\|+|\langle J_n^*\pi(x)J_n\tilde\xi-\tilde\eta,\tilde\zeta\rangle|<\epsilon$$
and so (fixing $\xi$ and $\eta$ and letting $\zeta$ vary) $\eta$ is in the weak closure of $\pi(A)\xi$.

To show that $\pi$ is quasidiagonal, let $P_n=J_nJ_n^*$ be the projection of $\cH$ onto $J_n\cH_n$.  Then
$P_n$ has finite rank, $P_n\to1$ strongly, and, for $x\in X_m$ and $n\geq m$,
$$\|P_n\pi(x)-\pi(x)P_n\|=\max(\|(1-P_n)\pi(x)P_n\|,\|P_n\pi(x)(1-P_n)\|)$$
$$=\max(\|(1-P_n)\pi(x)P_n\|,\|(1-P_n)\pi(x^*)P_n\|)$$
$$\|(1-P_n)\pi(x)P_n\|\leq\|\pi(x)P_n-J_nV_n(x)J_n^*\|+\|P_nJ_nV_n(x)J_n^*-P_n\pi(x)P_n\|$$
$$\leq \|\pi(x)J_n-J_nV_n(x)\|+\|J_nV_n(x)-\pi(x)J_n\|<2^{-n+1}$$
since $P_nJ_n=J_n$.   Similarly, $\|(1-P_n)\pi(x^*)P_n\|<2^{-n+1}$ since $x^*\in X_m$.

Finally, note that $\|J_1V_1(x_0)J_1^*\|=\|V_1(x_0)\|>3/4$ and $\|J_1^*\pi(x_0)J_1-V_1(x_0)\|<1/2$, so $\|\pi(x_0)\|>1/4$.

\section{Pure Matricial States}

In this section $A$ will be a general C*-algebra, not the specific C*-algebra of Section 2.

\begin{definition}
\label{def:pure-matr-state}
A \emph{pure matricial $n$-state}
on a \cst--algebra $A$ is a
completely positive contraction
$V\colon A\to \M_n=M_n(\C )$
such that there is an irreducible
representation $\pi\colon A\to \L (\Hi )$
and an isometry $I\colon \C^n\to \Hi$
such that $V(a)=I^*\pi(a)I$ for
$a\in A$. (Note that $\pi$ and $I$ are uniquely
determined up to unitary equivalence (of the $\Hi$) via the Stinespring dilation.)
\end{definition}

\begin{rems}\label{rems:pure-matr-state}
(i) We have chosen a definition which
is easily applicable for our needs.
There are other characterizations,
which are elementary functional
analysis exercises,
e.g.\ $V\colon A\to \M_n$ maps the
\emph{open} unit ball onto the
\emph{open} unit ball of $\M_n$ and
$V$ is an extreme point of the
convex set $CPC(A,\M_n)$
of completely positive
contractions $T\colon A\to \M_n$.
But, even for $A=\M_n$ the
extreme points of the unital maps in
of $CPC(A,\M_n)$  are in general
not pure
$n$-states, i.e.\ are not automorphisms
of $\M_n$ (for example, if $\phi$ is a
pure state of $A$, then the map $x\mapsto\phi(x)1$
from $A$ to $\M_n$ is an extreme point of $CPC(A,\M_n)$,
but not a pure matricial state of $A$).

\smallskip
\noindent
(ii)
If one applies the Kadison Transitivity
Theorem (see e.g.\ \cite[II.6.1.12]{BlackadarOperator}) to
the irreducible representation $\pi$
and the image of $I$, then one can
see that the restriction of $V$
to the multiplicative domain
$$
A^V:=
\{ a\in A\,: \, V(ab)=V(a)V(b)
\, \textrm{and} \,
V(ba)=V(b)V(a)\,,\, b\in A \}
$$
is an epimorphism from $A^V$ onto
$\M_n$, cf.~ \cite[3.4]{BlackadarKInner}.
In particular, $V$ maps the
closed unit ball of $A$
onto the closed unit ball
of $\M_n$, and hence maps the open unit ball of $A$
onto the open unit ball of $\M_n$.

\smallskip
\noindent
(iii) Up to unitary equivalence
(up to an automorphism of $\M_n$)
a pure matricial $n$-state is defined
by a projection $p$ in the
socle of the second conjugate $A^{**}$
of $A$ with $pAp\cong \M_n$:
Consider  the support projection
$z$ of the normal extension
$\overline{\pi}\colon A^{**}\to \L(\Hi )$ of
$\pi$ in the center of
$A^{**}$. The restriction $\varphi$ of
$\overline{\pi}$ to $A^{**}z$ defines
an isomorphism from  $A^{**}z$ onto  $\L (\Hi )$.
Let $p_V:=\varphi^{-1}(II^*)$; then
$b\mapsto I^*\varphi (b)I$ is an
isomorphism $\lambda _V$
from $p_VAp_V=p_VA^{**}p_V$
onto $\M_n$, such that
$V(b)=\lambda _V(p_Vbp_V)$ for $b\in A$.

Note that $p_V$ is just the support
projection of
the normal extension $\overline{V}$
of $V$ to $A^{**}$. Projections corresponding
to disjoint pure matricial states have orthogonal
central supports.

\smallskip
\noindent
(iv) A pure matricial $n$-state  $V\colon A\to \M_n$ always extends
to a unital pure matricial
$n$-state on the unitization $\tilde{A}$
of $A$  (see (ii)). In particular, $V$ is
unital if $A$ is unital.

\smallskip
\noindent
(v) Pure matricial $n$-states are in
obvious 1-1-correspondence with
those pure states $\eta$ on $A\otimes \M_n$
which have the additional
property that its (unique) extension
$\tilde{\eta}$ to $\tilde{A}\otimes \M_n$
satisfies
$\tilde{\eta}(1\otimes b)=\tau(b)$,
where $\tau$ denotes the tracial state
on $\M_n$. The correspondence is
given by $\eta _V(a\otimes b):=(1/n) \mathrm{Tr}(V(a)b^\top )$
where $b^\top $
denotes the \emph{transposed} matrix of $b$.
Not every pure state of $A\otimes\M_n$ has this property
(cf.\  (i)).

\noindent
[To see that such an $\eta_V$ extends to a pure state on $\tilde{A}\otimes\M_n$, note that
$a\otimes  b\mapsto (1/n) \mathrm{Tr}(b^\top a)$
defines a pure state $\eta$ on $\M_n\otimes \M_n$.
Conversely, every pure state $\eta$  on  $A\otimes \M_n$
is given by an irreducible representation
$\pi\colon A\to \L (\Hi)$ and a map $I\colon \C^n\to \cH$ such that
$\eta(a\otimes b)=(1/n)\mathrm{Tr}(V(a)b^\top)$ for
$a\in A$ and $b\in \M_n$, where $V(a):= I^*\pi(a)I$.
The condition $\eta(1\otimes  b)=(1/n)\mathrm{Tr}(b)$
implies that $I$ is an isometry.]

Here is an alternate way of viewing the situation.  A pure state on $A\otimes\M_n$ is a vector state from an
irreducible representation of $A\otimes\M_n$.  Up to unitary equivalence, every irreducible representation
of $A\otimes\M_n$ is of the form $\rho\otimes\sigma$, where $\rho$ is an irreducible representation of $A$
on a Hilbert space $\cH$, and $\sigma$ is the standard representation of $\M_n$ on $\C^n$.  Let $\{\zeta_1,\dots,\zeta_n\}$
be the standard basis for $\C^n$.  Then every unit vector in $\cH\otimes\C^n$ can be written in the form
$\sum_{j=1}^n \alpha_j(\xi_j\otimes\zeta_j)$, where the $\xi_j$ are unit vectors in $\cH$, $\alpha_j\geq0$,
and $\sum \alpha_j^2=1$; the representation is unique if all $\alpha_j>0$.  Then the vector state from this
vector corresponds to a pure matricial $n$-state on $A$ if and only if the $\xi_j$ are mutually orthogonal and all $\alpha_j$ are equal to $n^{-1/2}$.

\smallskip
\noindent
(vi) Every pure matricial $n$-state
$V\colon A\to \M_n$
on $A\subset B$ extends to
a pure matricial $n$-state
$V_e\colon B\to \M_n$:
Simply extend the pure state
$\tilde{\eta}$
on $\tilde{A}\otimes \M_n$ to
a pure state on $\tilde{B}\otimes \M_n$.

If $T\colon A\to B$ is completely isometric
\emph{and} completely positive then
there is a pure matrical state $W\colon B\to \M_n$
with $W\circ T=V$.

\noindent
[Indeed: $T$ extends to a unital
completely isometric map $T_1\colon A_1\to B_1$ of the
\emph{outer} unitizations $A_1$ and $B_1$.  An extremal extension
of the extremal state on $T_1(A_1)\otimes \M_n \subset B_1\otimes \M_n$
-- related to $V$ -- defines the desired extension of $V\circ T^{-1}$ to
all of $B_1$ by (iv).]

\smallskip
\noindent
(vii)  Since up to unitary equivalence the standard representation of $\M_n$ on $\C^n$
is the only irreducible representation of $\M_n$, every pure matricial state $V:\M_n\to\M_m$
(where necessarily $m\leq n$) is the compression of the identity representation to an
$m$-dimensional subspace of $\C^n$, i.e.\ there is a unique isometry $I:\C^m\to\C^n$
with $V(x)=I^*xI$ for all $x\in\M_n$, where $\M_n$ and $\M_m$ are identified with $\B(\C^n)$
and $\B(\C^m)$ in the standard way.
\end{rems}

The next result is the crucial technical tool needed for construction of the tower.

\begin{Lem}\label{lem:A-to-Bmu}
Suppose that
$\{ \psi _\mu \colon A\to B_\mu\,; \,\, \mu \in \Gamma \}$
is a separating family of
\cst--algebra homomorphisms.
Then for every pure matricial $n$-state
$V\colon A\to \M_n$, every $\delta >0$
and every finite subset $F\subset A$
there is a $\nu\in \Gamma$ and a
pure matricial $n$-state
$W\colon B_\nu\to \M_n$
such that
$$ \| W\psi _\nu (x)-V(x)\| <\delta\,
\quad \forall \, x\in F\,.$$
\end{Lem}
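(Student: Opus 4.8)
The plan is to convert $V$ into a pure state on $\tilde A\ot\M_n$, \emph{excise} that state by a positive element, push the excisor through the separating family onto a single factor $B_\nu$, and read off a pure matricial state there. The only serious tools are the remarks above and the excision theorem for pure states.

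\emph{Setup and reduction.} By Remark~\ref{rems:pure-matr-state}(v), $V$ corresponds to a pure state $\eta$ on $G:=\tilde A\ot\M_n$ with $\eta(1\ot b)=\tau(b)$. Let $\Phi_\mu:=\tilde\psi_\mu\ot\mathrm{id}\colon G\to G_\mu:=\widetilde{B_\mu}\ot\M_n$ be the unital extensions. Since $\{\psi_\mu\}$ separates points of $A$, the map $a+\lambda 1\mapsto(\tilde\psi_\mu(a+\lambda1))_\mu$ is an injective unital homomorphism of $\tilde A$ into $\prod_\mu\widetilde{B_\mu}$ (using the external unitizations $\widetilde{B_\mu}=B_\mu\oplus\C$), hence isometric; so $\bigoplus_\mu\Phi_\mu$ is isometric on $G$. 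Enlarge $F$ to a finite self-adjoint set $G_0\subset G$ containing all $x\ot e_{ij}$ $(x\in F)$ and all $1\ot e_{ij}$.

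\emph{Excision and localization.} By the excision theorem of Akemann--Anderson--Pedersen (see \cite{BlackadarOperator}), I would choose $e\ge0$, $\|e\|=1$, with $\|ege-\eta(g)e^2\|<\epsilon$ for $g\in G_0$. Because $\bigoplus_\mu\Phi_\mu$ is isometric, there is a $\nu$ with $\|h\|>1-\epsilon$, where $h:=\Phi_\nu(e)\ge0$; applying the contraction $\Phi_\nu$ yields $\|h\,\Phi_\nu(g)\,h-\eta(g)h^2\|<\epsilon$ on $G_0$. Pick a pure state $\zeta_0$ of $G_\nu$ with $\zeta_0(h)>1-\epsilon$; since $\|h\|\le1$ we have $\zeta_0(h^2)\le1$, so in the GNS representation $(\pi_{\zeta_0},\Omega)$ the vector $\pi_{\zeta_0}(h)\Omega$ lies within $\sqrt{2\epsilon}$ of $\Omega$. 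Hence for $g\in G_0$, $\zeta_0(\Phi_\nu(g))\approx\zeta_0(h\,\Phi_\nu(g)\,h)\approx\eta(g)\zeta_0(h^2)\approx\eta(g)$, i.e.\ $\zeta_0\circ\Phi_\nu$ agrees with $\eta$ on $G_0$ up to an error controlled by $\epsilon$ and $\max_{g\in G_0}\|g\|$.

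\emph{Restoring the trace condition and concluding.} Evaluating on $1\ot e_{ij}\in G_0$ gives $\zeta_0(1\ot e_{ij})\approx\tau(e_{ij})$, so $\zeta_0$ nearly satisfies the trace condition. Writing $\zeta_0$ as the vector state of $v_0=\sum_j\alpha_j\,\xi_j\ot\zeta_j$ in some $\rho\ot\sigma$ as in Remark~\ref{rems:pure-matr-state}(v), the approximate trace condition forces $\alpha_j\approx n^{-1/2}$ and the $\xi_j$ to be approximately orthonormal. Replacing the $\alpha_j$ by $n^{-1/2}$ and Gram--Schmidt-orthonormalizing the $\xi_j$ yields a nearby unit vector $v'=n^{-1/2}\sum_j\xi_j'\ot\zeta_j$ whose vector state $\zeta'$ satisfies the trace condition exactly; by Remark~\ref{rems:pure-matr-state}(v) this $\zeta'=\eta_W$ for a genuine pure matricial $n$-state $W\colon B_\nu\to\M_n$, and $\zeta'$ is close to $\zeta_0$ on $G_0$. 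Reading off matrix entries through $\eta_W(\psi_\nu(x)\ot e_{ij})=\tfrac1n(W\psi_\nu(x))_{ij}$ and $\eta(x\ot e_{ij})=\tfrac1n V(x)_{ij}$, the estimates above give $\|W\psi_\nu(x)-V(x)\|<\delta$ for all $x\in F$, once $\epsilon$ and the auxiliary errors are taken small relative to $\delta/n^2$.

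\emph{Main obstacle.} The conceptual engine is the excision--localization step: excision rewrites $\eta$ as an approximate compression by $e$, a relation preserved by every $\Phi_\mu$, so it survives on whichever factor $B_\nu$ retains the norm of $e$. The delicate technical point is the last step, namely manufacturing from the localized pure state $\zeta_0$---which only \emph{approximately} meets the trace condition---an exactly normalized nearby pure state, so that the output is a \emph{pure} matricial state and not merely a matricial state; the vector perturbation of Remark~\ref{rems:pure-matr-state}(v) resolves this, but one must verify that the orthonormalization displaces the vector by an amount controlled by $\epsilon$.
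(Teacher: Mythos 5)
Your proposal is correct and follows essentially the same route as the paper: pass to $A\otimes\M_n$, approximate the pure state $\eta_V$ by a pure state factoring through some $B_\nu\otimes\M_n$, and then restore the exact trace condition on $1\otimes\M_n$ by replacing the coefficients $\alpha_j$ with $n^{-1/2}$ and Gram--Schmidt-orthonormalizing the $\xi_j$ in the GNS vector. The only difference is that where the paper simply cites the classical $n=1$ approximation result (Dixmier 3.4.2(ii)) applied to the separating family $\{\psi_\mu\otimes\mathrm{id}\}$, you prove that step from scratch via excision; this is a valid, self-contained substitute.
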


\pf
If $n=1$ (the case of pure states) the result is well known (cf.\ \cite[3.4.2(ii)]{DixmierC*Algebras}).  For the general case,
we may assume $A$ and the $B_\mu$ are unital.
Replace $A$ and $B_\mu$ by $A\otimes\M_n$ and $B_\mu\otimes\M_n$, and $\psi_\mu$ by $\psi_\mu\otimes id$.
Let $F\otimes E=\{x\otimes e_{ij}\,:\, x\in F, 1\leq i,j\leq n\}$, where the $e_{ij}$ are the standard matrix units in $\M_n$.
The pure state $\eta_V$ on $A\otimes\M_n$ corresponding to the pure matricial state $V$ on $A$ can
be approximated arbitrarily closely (within $\delta/6n^4$ will do) on $F\otimes E$ by a pure state $\theta$
on $B_\nu\otimes\M_n$ for some $\nu$.  The restriction of $\theta$ to $1\otimes\M_n$ is not (obviously) exactly $\tau$, the tracial state on $\M_n$,
but is at least approximately $\tau$.  We must perturb $\theta$ to make the restriction exactly $\tau$.

When $\theta$ is represented as a vector state with vector $\sum_{j=1}^n\alpha_j(\xi_j\otimes\zeta_j)$ as in \ref{rems:pure-matr-state}(v),
the $\xi_j$ are almost mutually orthogonal and $\alpha_j$ satisfy $|\alpha_j-n^{-1/2}|<\delta/6n^4$ for all $j$.  Let $\varphi$ be the (pure) state of $B_\nu\otimes\M_n$ corresponding
to the vector $\sum_{j=1}^n n^{-1/2}(\tilde \xi_j\otimes\zeta_j)$, where the $\tilde \xi_j$ are obtained from the $\xi_j$ by the Gram-Schmidt process.
We have $\|\tilde \xi_j-\xi_j\|<\delta/3n^3$, so $\|\varphi-\theta\|<\delta/2n^2$ and $\|\varphi(x)-\eta_V(x)\|<\delta/n^2$ for $x\in F\otimes E$.
Then $\varphi=\eta_W$ for some pure matricial $n$-state $W$ on $A$ factoring through $B_\nu$, and $\|W(x)-V(x)\|<\delta$ for all $x\in F$.
\qed

\section{Constructing the Tower}

We now construct the tower used in the proof in Section 2, using the next two lemmas:

\begin{Lem}\label{fl1}
Let $B$ be an inner quasidiagonal C*-algebra, $F$ a finite subset of the unit ball of $B$, $b\in F$, and $\epsilon>0$.  Then there is a pure
matricial state $V:B\to\M_n$ for some $n$, such that $\|V(xy)-V(x)V(y)\|<\epsilon$ for all $x,y\in F$ and $\|V(b)\|>\|b\|-\epsilon$.
\end{Lem}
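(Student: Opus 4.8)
The plan is to use inner quasidiagonality to produce a representation together with a finite-rank projection in its bicommutant whose compression is nearly multiplicative and norm-preserving on the prescribed data, and then to extract from this compression a genuine \emph{pure} matricial state by decomposing the representation into irreducibles.

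First I would apply the definition of inner quasidiagonality to $F\cup\{b\}$ with a small tolerance $\epsilon'\le\epsilon$ (to be fixed), obtaining a representation $\pi\colon B\to\B(\cH)$ and a finite-rank projection $P\in\pi(B)''$ with $\|P\pi(x)-\pi(x)P\|<\epsilon'$ for all $x\in F\cup\{b\}$ and $\|P\pi(b)P\|>\|b\|-\epsilon'$. Writing $V_0(x)=P\pi(x)P$, viewed as a completely positive contraction into $\M_n=\B(P\cH)$ with $n=\mathrm{rank}(P)$, the commutator estimate gives $V_0(xy)-V_0(x)V_0(y)=P\pi(x)(1-P)\pi(y)P$, whence $\|V_0(xy)-V_0(x)V_0(y)\|\le\|(1-P)\pi(y)P\|<\epsilon'$ for $x,y\in F$, while $\|V_0(b)\|>\|b\|-\epsilon'$. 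Thus $V_0$ already satisfies the two desired inequalities; the only missing property is that the underlying representation be irreducible.

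To remedy this I would pass to a direct integral of irreducibles, the essential point being that the hypothesis $P\in\pi(B)''$ (exactly what distinguishes inner quasidiagonality from mere quasidiagonality) forces $P$ to be decomposable. Concretely, I would first cut down to the separable setting: choose a sequence $(a_k)$ in the unit ball of $B$ with $\pi(a_k)\to P$ strongly, let $B_0$ be the separable C*-subalgebra generated by $F\cup\{b\}\cup\{a_k\}$, and restrict $\pi$ to the separable invariant subspace $\cK=\overline{\pi(B_0)P\cH}$. Denoting by $\rho$ the resulting separable representation of $B_0$, one checks that $P$ still lies in $\rho(B_0)''$ and that the compression is unchanged on $B_0$. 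Decomposing $\rho=\int^{\oplus}\rho_\omega\,d\mu(\omega)$ into irreducibles, the projection $P$ decomposes as a measurable field of finite-rank projections $P_\omega$, and the fiber compressions $V_\omega(x)=P_\omega\rho_\omega(x)P_\omega$ are, by Definition \ref{def:pure-matr-state}, pure matricial states on $B_0$ (compressions of the irreducible $\rho_\omega$ to the finite-dimensional subspace $P_\omega\cK_\omega$).

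It then remains to select a good fiber. Since the norm of a decomposable operator is the essential supremum of the fiber norms, the estimate $\|V_0(xy)-V_0(x)V_0(y)\|<\epsilon'$ shows that, for each of the finitely many pairs $x,y\in F$, almost every fiber is $\epsilon'$-multiplicative, while $\|V_0(b)\|>\|b\|-\epsilon'$ shows that the set of $\omega$ with $\|V_\omega(b)\|>\|b\|-\epsilon'$ has positive measure. Discarding the null set of reducible fibers and the null sets of multiplicativity failures, I would pick $\omega_0$ in the remaining positive-measure set; then $V_{\omega_0}$ is a pure matricial state on $B_0$ that is $\epsilon'$-multiplicative on $F$ with $\|V_{\omega_0}(b)\|>\|b\|-\epsilon'$ (in particular $P_{\omega_0}\neq0$). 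Finally, using Remark \ref{rems:pure-matr-state}(vi) I would extend $V_{\omega_0}$ from $B_0$ to a pure matricial state $V$ on all of $B$; since $F\cup\{b\}\subset B_0$, $V$ inherits both inequalities. The main obstacle is the bookkeeping around the direct integral: justifying the separable reduction so that the decomposition theory applies to a possibly non-separable $B$, and converting the operator-norm estimates on $V_0$ into almost-everywhere fiber statements so that a single irreducible fiber can be selected.
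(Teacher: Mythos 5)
Your argument is correct in substance but takes a genuinely different, and considerably heavier, route than the paper's. Both proofs hinge on the same key point -- that $P\in\pi(B)''$ forces $P$ to be compatible with a decomposition of $\pi$ into irreducibles -- but the paper exploits this algebraically and finitely: since $P$ has finite rank, its central support $Q$ in $\pi(B)''$ is a \emph{finite} sum of minimal central projections $Q_1,\dots,Q_m$ (the center of $Q\pi(B)''Q$ embeds in the finite-dimensional algebra $P\pi(B)''P$), one picks minimal projections $R_j\in\pi(B)'$ under each $Q_j$, notes that $\pi|_{R_j\cH}$ is irreducible and that $\|P\pi(x)P\|=\max_j\|PR_j\pi(x)PR_j\|$, and then selects the index $j$ realizing the maximum for $b$; the commutator estimate passes to $P_j=PR_j$ because $R_j$ commutes with both $P$ and $\pi(B)$. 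This avoids direct integrals, measurability questions, and any separable reduction, and works verbatim for nonseparable $B$. Your direct-integral version reaches the same fiber-selection conclusion (and your observation that the essential-supremum formula converts the norm estimates into almost-everywhere fiber statements is sound), but the machinery costs you exactly the two things you flag: (a) the separable reduction cannot be done in one shot as written -- for nonseparable $\cH$ the Kaplansky density theorem yields only a \emph{net} converging strongly to $P$, so "choose a sequence $(a_k)$ with $\pi(a_k)\to P$ strongly" must be replaced by an alternating construction of increasing separable subalgebras $B_0^{(k)}$ and increasing separable invariant subspaces $\cK^{(k)}$, extracting at each stage a sequence converging to $P$ strongly on $\cK^{(k)}$ (possible there since the strong topology on bounded sets is metrizable relative to a separable subspace), and diagonalizing; and (b) since $P$ has finite rank, the measure is necessarily purely atomic with finitely many atoms on the set where $P_\omega\neq0$, so your direct integral secretly collapses to exactly the finite direct sum the paper works with directly. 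In short: your proof can be completed, but the paper's factor-decomposition argument delivers the same conclusion with no measure theory and no separability hypothesis.
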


\begin{proof}
In the separable case, this is just (i) $\imp$ (ii) of \cite[3.7]{BlackadarKInner} (note that there is a misprint in the published statement
of \cite[3.16(ii)]{BlackadarKInner}).  We give the simple argument, which was omitted in \cite{BlackadarKInner}
and which does not require separability.

By the definition of inner quasidiagonality, there is a representation $\pi$ of $B$ on a Hilbert space $\cH$ and a finite-rank
projection $P\in\pi(B)''\subseteq\B(\cH)$ such that $\|P\pi(x)-\pi(x)P\|<\epsilon$ for all $x\in F$ and $\|P\pi(b)P\|>\|b\|-\epsilon$.  The
central support $Q$ of $P$ in $\pi(B)''$ is Type I and is a sum of finitely many minimal central projections $Q_1,\dots,Q_m$.
If $R_1,\dots,R_m$ are minimal projections in $\pi(B)'$ with $Q_j$ the central support of $R_j$, and $P_j=PR_j$, then
$\|P\pi(x)P\|=\max_j\|P_j\pi(x)P_j\|$ for all $x\in B$.  Fix $j$ with $\|P_j\pi(b)P_j\|>\|b\|-\epsilon$.  Then $\rho=\pi|_{R_j\cH}$
is irreducible.  For $x\in B$, let $V(x)=P_j\rho(x)P_j\in\B(P_j\cH)\cong\M_n$, where $n=dim(P_j\cH)$; then $V$ is the desired pure matricial state of $B$:
$\|P_j\pi(x)-\pi(x)P_j\|<\epsilon$ for all $x\in F$, so
$$\|V(xy)-V(x)V(y)\|=\|P_j\pi(xy)P_j-P_j\pi(x)P_j\pi(y)P_j\|$$
$$=\|P_j(P_j\pi(x)-\pi(x)P_j)\pi(y)P_j\|<\epsilon$$
for all $x,y\in F$.
\end{proof}

\begin{Lem}\label{fl2}
Let $B$ be a separable inner quasidiagonal C*-algebra, $F$ a finite subset of $B$, $V:B\to\M_n\cong\B(\C^n)$ a pure matricial state of $B$, and $\epsilon>0$.
Then there is a pure matricial state $W:B\to\M_m\cong\B(\C^m)$ for some $m$ and an isometry $I:\C^n\to\C^m$ such that
\begin{enumerate}
\item[(i)]  $\|W(xy)-W(x)W(y)\|<\epsilon$ for all $x,y\in F$.
\item[(ii)]  $\|I^*W(x)I-V(x)\|<\epsilon$ for all $x\in F$.
\end{enumerate}
\end{Lem}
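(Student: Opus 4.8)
The plan is to realize $V$ by an irreducible representation $\pi\colon B\to\B(\cH)$ together with an isometry $I_0\colon\C^n\to\cH$ with $V=I_0^*\pi(\cdot)I_0$, and then to manufacture $W$ as the compression of a representation of $B$ to the range of a finite-rank projection that approximately commutes with the relevant operators. The point of (i) is approximate multiplicativity, and the mechanism of Lemma~\ref{fl1} already shows how to extract such a state: if $\rho$ is an irreducible subrepresentation obtained by cutting down by a minimal projection in the commutant, and $P$ is a finite-rank projection in the bicommutant $\rho(B)''=\B(\rho(B)\cH)$ with $\|[P,\rho(x)]\|$ small for $x\in F$, then $W(\cdot)=P\rho(\cdot)P$ is an approximately multiplicative pure matricial state. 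So the genuine content of the lemma is (ii): one must arrange that the given, otherwise arbitrary, state $V$ is within $\epsilon$ on $F$ of the compression $I^*W(\cdot)I$ of such a $W$ to an $n$-dimensional subspace of $\im P$.

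First I would use inner quasidiagonality of $B$, through the construction in Lemma~\ref{fl1} together with separability, to produce a quasidiagonal environment: a representation $\sigma$ of $B$ carrying finite-rank projections in $\sigma(B)''$ that approximately commute with $\sigma(F)$ and nearly preserve norms, so that the associated compressions are approximately multiplicative pure matricial states. Next I would realize $V$ approximately as a compression inside this environment, and this is exactly where Lemma~\ref{lem:A-to-Bmu} enters. Applying it to a separating family of homomorphisms built from the inner quasidiagonal structure of $B$, with tolerance $\delta\ll\epsilon$ and finite set $F$, I obtain a pure matricial $n$-state $W_0$ living on a member of the family with $\|W_0\circ\psi_\nu(x)-V(x)\|<\delta$ for $x\in F$; composing, $V$ is then approximated on $F$ by a pure matricial $n$-state whose underlying irreducible representation factors through this environment, i.e.\ by the compression of $\sigma$ to an $n$-dimensional subspace $\cN$. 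Finally I would enlarge $\cN$: choosing the approximately central finite-rank projection $P$ large enough (using that such projections can be taken to converge strongly to $1$) that $\cN\subseteq\im P$, and cutting $\sigma$ down to the single irreducible summand meeting $\cN$ as in Lemma~\ref{fl1}, I would set $W(\cdot)=P\sigma(\cdot)P$ on $\im P\cong\C^m$ and take $I\colon\C^n\to\C^m$ to be the inclusion $\cN\hookrightarrow\im P$. Then $W$ is an approximately multiplicative pure matricial state, and $I^*W(\cdot)I$ agrees with the compression of $\sigma$ to $\cN$, hence with $V$, to within $\epsilon$ on $F$.

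The main obstacle is the alignment carried out in the middle step. The irreducible representation $\pi$ underlying the arbitrary state $V$ need not be quasidiagonal and has no a~priori relation to the quasidiagonal representation $\sigma$ supplied by inner quasidiagonality, so one cannot simply hunt for an approximately central finite-rank projection for $\pi$ whose range contains $I_0(\C^n)$; indeed compressing $\pi$ directly by the rank-$n$ projection $I_0I_0^*$ reproduces the non-multiplicative $V$. One must instead transport $V$ into $\sigma$ (the role of Lemma~\ref{lem:A-to-Bmu}) and, crucially, ensure that the transported subspace $\cN$ sits inside a single irreducible block carrying a good approximately central projection, rather than being smeared across many blocks, so that the resulting $W$ is a genuine pure matricial state. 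Securing this placement, and passing to a projection $P$ large enough that $\cN\subseteq\im P$ while $\|[P,\sigma(F)]\|$ remains small, is where separability of $B$ is used and where the real work of the proof lies.
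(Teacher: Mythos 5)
You have correctly isolated the two ingredients (Lemma \ref{fl1} for approximate multiplicativity, Lemma \ref{lem:A-to-Bmu} for transporting $V$) and correctly located the difficulty in the transport step, but the execution has a genuine gap in two places. First, the separating family of $*$-homomorphisms to which you apply Lemma \ref{lem:A-to-Bmu} is never constructed: the approximately multiplicative states supplied by Lemma \ref{fl1} are not homomorphisms, and it is not clear what "a separating family of homomorphisms built from the inner quasidiagonal structure of $B$" is ($B$ could be simple). The paper's device is to take states $W_k:B\to\M_{m_k}$ from Lemma \ref{fl1} that are $2^{-k}$-multiplicative on an increasing exhaustion of a countable dense set, form $M=\prod_k\M_{m_k}$ and $J=\bigoplus_k\M_{m_k}$, observe that $(W_k)$ induces an injective $*$-homomorphism $B\to M/J$, extend $V$ to a pure matricial state of $M$ by \ref{rems:pure-matr-state}(vi), and apply Lemma \ref{lem:A-to-Bmu} to the coordinate maps $\varphi_k:M\to\M_{m_k}$, which \emph{are} a separating family of homomorphisms (of $M$, not of $B$). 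This is where separability is actually used.

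Second, and more seriously, your final alignment step does not go through. For a \emph{fixed} representation $\sigma$, inner quasidiagonality gives one finite-rank projection in $\sigma(B)''$ per finite set and tolerance, possibly in a different representation each time; it does not give, in a single representation, a net of approximately central finite-rank projections converging strongly to $1$. So you cannot "choose $P$ large enough that $\cN\subseteq\im P$" while keeping $\|[P,\sigma(x)]\|$ small -- the existence of such a net in a fixed irreducible representation is essentially the quasidiagonality conclusion of Theorem \ref{MThm}, not a hypothesis. (There is also a smaller problem: a pure matricial $n$-state of $\pi_\nu(B)\subseteq\B(\cH_\nu)$ produced by Lemma \ref{lem:A-to-Bmu} arises from \emph{some} irreducible representation of the C*-algebra $\pi_\nu(B)$, not necessarily from the identity representation on $\cH_\nu$, so the subspace $\cN$ need not live in $\cH_\nu$ at all; and even when it does, exact containment $\cN\subseteq\im P$ would require a further perturbation argument.) The paper's route avoids every one of these issues because the approximating state lands on a full matrix algebra $\M_{m_k}$, whose only irreducible representation is the standard one on $\C^{m_k}$: by \ref{rems:pure-matr-state}(vii) it is automatically of the form $y\mapsto I^*yI$ for an isometry $I:\C^n\to\C^{m_k}$, so one simply takes $W=W_k$ and there is no subspace to enlarge or align.
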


\begin{proof}
Let $X=\{x_1,x_2,\dots\}$ be a countable dense subset of the unit ball of $B$, and $X_k=\{x_1,\dots,x_k\}$.
For each $k$, apply \ref{fl1} to obtain a pure matricial state $W_k:B\to\M_{m_k}$ such that $\|W_k(xy)-W_k(x)W_k(y)\|<2^{-k}$ for all $x,y\in X_k$,
and $\|W_k(x_k)\|>\|x_k\|-2^{-k}$.  Set $M_k=\M_{m_k}$, $M=\prod_k M_k$, $J=\oplus_k M_k$.   Let $\varphi_k:M\to M_k$ be the $k$'th coordinate map.  Then $\{\varphi_k\}$ is
a separating family of *-homomorphisms on $M$ (since for each $k_0$, $\{x_k\,:\,k\geq k_0\}$ is dense in the unit ball of $B$).

The map $\Psi:b\mapsto(W_1(b),W_2(b),\dots)$ from $B$ to $M$
drops to an injective *-homo\-morphism $\psi$ from $B$ to $M/J$.  By \ref{rems:pure-matr-state}(vi), the pure matricial state $V:B\to\M_n$ extends to a pure matricial state, also called $V$,
from $M/J$ to $\M_n$; $V$ may be regarded as a pure matricial state on $M$ by composing with the quotient map from $M$ to $M/J$.  By \ref{lem:A-to-Bmu}, for some $k$ there is a
pure matricial state $U$ on $M_k$ with $\|U(\varphi_k(x))-V(x)\|<\epsilon$ for all $x\in F$ (where $B$ is identified with $\Psi(B)$).  By \ref{rems:pure-matr-state}(vii), there is an isometry $I:\C^n\to\C^{m_k}$ such that
$U(y)=I^*yI$ for $y\in M_k$.  Set $m=m_k$, $W=W_k$ (note that $W_k(x)=\varphi_k(x)$ for $x\in B\subseteq M$).
\end{proof}

We now construct the tower.  
Let $A$ and $x_0$ be as in Section 2, and let $X$ be a self-adjoint countable dense subset of the unit ball of $A$ containing $x_0$, closed under multiplication.
Enumerate $X$ as
$$X=\{x_0,x_0^*,x_1,x_1^*,\dots,x_n,x_n^*,\dots\}$$
Set $X_1=\{x_0,x_0^*\}$, and by Lemma \ref{fl1} choose a pure matricial $m_1$-state $V_1:A\to\M_{m_1}\cong\B(\C^{m_1})$ of $B=A$ with $F=X_1$, $b=x_0$, and $\epsilon=2^{-3}$,
and set $\cH_1=\C^{m_1}$.

Suppose $X_j,\cH_j=\C^{m_j},V_j, I_j$ have been defined for $1\leq j\leq n$.  Since $V_n$ maps the closed unit ball of $A$ onto the closed unit ball of $\M_{m_n}$,
there is a $k_n$ such that $X_{n+1}:=\{x_0,x_0^*,\dots,x_{k_n},x_{k_n}^*\}$ contains $X_n$ and $X_n^2$ and such that $V_n(X_{n+1})$ is $2^{-n-2}$-dense in the unit ball of $\M_{m_n}$.
By Lemma \ref{fl2} with $B=A$, $F=X_{n+1}$, $V=V_n$, and $\epsilon=2^{-n-2}$ choose a pure matricial $m_{n+1}$-state $V_{n+1}$ of $A$ and an isometry $I_n:\C^{m_n}\to\C^{m_{n+1}}$.

The $X_n,\cH_n,V_n,I_n$ satisfy (i), (ii), (iii), and (iv) by construction.
The tower thus has all the properties required in Section 2, completing the proof of Theorem \ref{MThm}.

\begin{rem}
What about the nonseparable case?  Many parts of the argument have obvious generalizations to the nonseparable case.
The separability hypothesis in Lemma \ref{fl2} should be removable at the cost of some technical complication.
However, it is doubtful that the argument in the proof of Theorem \ref{MThm} can be adapted to the nonseparable case.  For a slight variation of
this argument can be used to give a new proof of the well-known fact that a separable prime C*-algebra is primitive; this is known to be false
in general for nonseparable C*-algebras \cite{WeaverPrime}.
\end{rem}

\section{Inductive Limits}

We do not know whether a general inner quasidiagonal C*-algebra has a separating family of quasidiagonal irreducible representations.  However,
it follows immediately from \cite[3.6]{BlackadarKInner} that every inner quasidiagonal C*-algebra is an inductive limit (with injective connecting maps)
of separable inner quasidiagonal C*-algebras, for which \ref{MThm} holds.  Thus the theory of inner quasidiagonal C*-algebras can be largely reduced
to the separable case.  But to complete this reduction, we must know that an arbitrary inductive limit of inner quasidiagonal C*-algebras is inner quasidiagonal.

In \cite[2.11]{BlackadarKInner}, it was stated that ``it is obvious from the definition'' that an inductive limit of an inductive system of
inner quasidiagonal C*-algebras (with injective connecting maps) is inner quasidiagonal.  In fact, this is not so obvious from the definition,
but it is obvious from the equivalence of (i) and (ii) of \cite[3.7]{BlackadarKInner} in the separable case (see \ref{IndLimThm}).  So to prove that general
inductive limits of inner quasidiagonal C*-algebras are inner quasidiagonal, it suffices to remove the separability hypothesis in this equivalence:

\begin{Prop}\label{Nonsep37}
Let $A$ be a C*-algebra.  Then $A$ is inner quasidiagonal if and only if the following condition is satisfied:

\noindent
For every $a_1,\dots,a_n,b\in A$ and $\epsilon>0$, there is a pure matricial state $V$ of $A$ such that $\|V(a_i)V(a_j)-V(a_ia_j)\|<\epsilon$
for $1\leq i,j\leq n$ and $\|V(b)\|>\|b\|-\epsilon$.
\end{Prop}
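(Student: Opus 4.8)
The two implications are handled separately, and the reverse one carries all the weight. For the forward implication, assume $A$ is inner quasidiagonal and fix $a_1,\dots,a_n,b$ and $\epsilon>0$. Set $M=\max(1,\|a_1\|,\dots,\|a_n\|,\|b\|)$ and apply Lemma \ref{fl1} to the subset $F=\{a_1/M,\dots,a_n/M,b/M\}$ of the unit ball, with distinguished element $b/M$ and tolerance $\epsilon/M^2$. Rescaling the resulting pure matricial state $V$ by linearity gives $\|V(a_i)V(a_j)-V(a_ia_j)\|<\epsilon$ for all $i,j$ and $\|V(b)\|>\|b\|-\epsilon$, which is the asserted condition. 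This direction uses only Lemma \ref{fl1}, which carries no separability hypothesis.

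For the reverse implication, given a finite $F=\{x_1,\dots,x_n\}$ and $\epsilon>0$ I will build a representation $\pi=\sigma_1\oplus\cdots\oplus\sigma_r$ of $A$, with the $\sigma_l$ pairwise \emph{inequivalent} irreducible representations, and a block projection $P=Q_1\oplus\cdots\oplus Q_r$. Pairwise inequivalence gives $\pi(A)''=\bigoplus_l\B(\cH_{\sigma_l})$, so the requirement $P\in\pi(A)''$ becomes automatic; the defining inequalities of inner quasidiagonality then reduce to $\max_l\|[Q_l,\sigma_l(x_i)]\|<\epsilon$ for all $i$, together with norm-detection of each $x_j$ in some block. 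Near-multiplicativity is converted into near-commutation through the identity $\|V(x^*x)-V(x^*)V(x)\|=\|(1-Q)\sigma(x)Q\|^2$, valid for $V=I^*\sigma(\cdot)I$ with $Q=II^*$, so that applying the hypothesis with the $a$'s ranging over $F\cup F^*$ drives the relevant commutators below any prescribed bound.

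The substance is producing the inequivalent detecting blocks. I would first descend to a separable subalgebra $B$ with $F\subseteq B\subseteq A$ which itself satisfies the stated condition; by the separable equivalence \cite[3.7]{BlackadarKInner}, $B$ is then inner quasidiagonal, and by Theorem \ref{MThm} it has a separating family of quasidiagonal irreducible representations. Since the family is separating, each $x_j$ is detected in norm by one of its members, and quasidiagonality of that member supplies a single finite-rank, almost-commuting projection detecting all the $x_j$ assigned to it; grouping by unitary class yields pairwise disjoint irreducible representations $\rho_1,\dots,\rho_r$ of $B$ together with compression projections realizing the detection. I then extend the corresponding pure matricial states from $B$ to $A$ by \ref{rems:pure-matr-state}(vi), obtaining irreducible representations $\sigma_l$ of $A$ and projections $Q^{(l)}$ with the same values, hence the same detection and commutator estimates, on $F\cup F^*$. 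Even if two of the $\sigma_l$ coincide, the subrepresentations of $\sigma_l|_B$ generated by the compression ranges are copies of the disjoint $\rho_l$ and hence lie on orthogonal subspaces; the projections $Q^{(l)}$ are therefore mutually orthogonal, so within each class of $A$ their sum $Q=\sum Q^{(l)}$ is again a projection, dominates each summand (so detects every assigned norm), and satisfies $\|(1-Q)\sigma(x_i)Q\|\le\sqrt{r}\,\max_l\|(1-Q^{(l)})\sigma(x_i)Q^{(l)}\|$. Summing over the now-distinct classes produces $\pi$ and $P$ of the required form.

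The main obstacle is the very first step, the descent to a separable $B$ satisfying the condition: a pure matricial state of $A$ does not restrict to a pure matricial state of $B$, so the condition is not inherited by naive restriction. I would instead build $B=\overline{\bigcup_k B_k}$ by a L\"owenheim--Skolem construction, at each stage adjoining to $B_k$ countably many elements of $A$ that, through the Kadison transitivity argument of \ref{rems:pure-matr-state}(ii), witness within $B_{k+1}$ pure matricial states approximating the $A$-witnesses for a countable dense set of data drawn from $B_k$; the closure $B$ then satisfies the condition. This bookkeeping is the ``technical complication'' foreseen in the remark following Lemma \ref{fl2}, and it---together with the disjointness-to-orthogonality merge of the previous paragraph---is where the genuine work lies, the remaining estimates being the routine ones already carried out in Section 2.
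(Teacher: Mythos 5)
Your argument is correct in outline, but it reaches the ``if'' direction by a genuinely different route than the paper. The paper never leaves the (possibly nonseparable) algebra $A$ and never invokes Theorem \ref{MThm}: it strengthens the hypothesis to a statement $P(m)$ asserting the existence of finitely many \emph{pairwise disjoint} pure matricial states which jointly norm-detect $m$ prescribed elements while each is almost multiplicative on the $a_i$, and proves $P(m)$ by induction on $m$. The entire difficulty is concentrated in the inductive step, when the new irreducible representation $\pi_{r+1}$ supplied by $P(1)$ happens to be equivalent to one of the old $\pi_k$: if $\pi_k(A)\cap\cK(\cH_k)=\{0\}$ the new compression is absorbed by moving its isometry into $(1-p_k)\cH_k$ via an Arveson--Voiculescu approximation, and if $\pi_k(A)\supseteq\cK(\cH_k)$ one preselects a positive $c_k$ with $\pi_k(c_k)=p_k$, uses near-multiplicativity on $c_k$ to show the two range projections nearly commute, and then applies a projection-perturbation lemma to make them exactly commute. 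Your proposal instead descends to a separable subalgebra $B$ satisfying the condition, gets genuinely disjoint quasidiagonal irreducible representations of $B$ from \ref{MThm}, extends the resulting pure matricial states to $A$ by \ref{rems:pure-matr-state}(vi), and observes that even when two extensions are equivalent as representations of $A$, the cyclic $B$-subspaces generated by the compression ranges carry the disjoint $\rho_l$ and are therefore orthogonal, so the projections can simply be summed (with a harmless $\sqrt{r}$ loss, $r\leq n$, in the commutator estimate). That orthogonality observation is correct and elegantly replaces the paper's two-case repair. What your route costs is twofold: it uses the full strength of \ref{MThm} where the paper's argument is self-contained at this point, and it rests on the one step you leave as a sketch, the L\"owenheim--Skolem descent to a separable $B$ on which the condition still holds. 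That step is genuine work---a pure matricial state of $A$ does not restrict to one of $B$, so one must iteratively adjoin Kadison-transitivity witnesses until the restricted representations become irreducible on the relevant cyclic subspaces---but it does go through, and written out it is comparable in length to the paper's induction. So the proposal is a viable alternative proof, not a reproduction of the paper's.
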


The ``only if'' direction follows immediately from Lemma \ref{fl1}.  To prove the converse, we will show by induction on $m$ that if the
statement in \ref{Nonsep37} holds, the following condition $P(m)$ holds for every $m$.  Then, given $a_1,\dots,a_n\in A$ and $\epsilon>0$,
applying $P(n)$ with $b_j=a_j$ shows that $A$ is inner quasidiagonal.

\bigskip

$P(m):$ For every $a_1,\dots,a_n,b_1,\dots,b_m\in A$ and $\epsilon>0$, there are finitely many pure matricial states $V_1,\dots,V_r$ of $A$ such that
\begin{enumerate}
\item[(i)] $\|V_k(a_i)V_k(a_j)-V_k(a_ia_j)\|<\epsilon$ for $1\leq i,j\leq n$, $1\leq k\leq r$.
\item[(ii)]  $\max_k \|V_k(b_j)\|>\|b_j\|-\epsilon$ for $1\leq j\leq m$.
\item[(iii)]  The $V_k$ are pairwise disjoint (the corresponding irreducible representations are pairwise inequivalent).
\end{enumerate}

Note that $P(1)$ is exactly the condition in the statement of \ref{Nonsep37}.

\bigskip

Assume $P(m)$ holds (and thus $P(1)$ also holds), and let $a_1,\dots,a_n,b_1,\dots,b_{m+1}\in A$ and $\epsilon>0$.
Fix $\delta>0$ such that $\delta<\frac{1}{4}$ and
$$2\delta+12\delta(\max_{i,j}\{\|a_i\|,\|b_j\|\})<\epsilon\,.$$
Let $\pi_k$ ($1\leq k\leq r$)
be pairwise inequivalent irreducible representations of $A$ on $\cH_k$ with finite-rank projections $p_k\in\B(\cH_k)$
such that the pure matricial states $U_k(\cdot)=p_k\pi_k(\cdot)p_k$ satisfy
\begin{enumerate}
\item[(i)] $\|U_k(x)U_k(y)-U_k(xy)\|<\delta$ for $x,y\in\{a_1,\dots,a_n,b_1,\dots,b_{m+1}\}$, $1\leq k\leq r$.
\item[(ii)]  $\max_k \|V_k(b_j)\|>\|b_j\|-\delta$ for $1\leq j\leq m$.
\end{enumerate}

If $\pi_k(A)$ contains $\cK(\cH_k)$ (i.e.\ if $\pi_k(A)\cap\cK(\cH_k)\neq\{0\}$), then there is a $c_k\in A_+$, $\|c_k\|=1$,
with $\pi_k(c_k)=p_k$.  If $\pi_k(A)\cap\cK(\cH_k)=\{0\}$, set $c_k=0$.

By $P(1)$ there is an irreducible representation $\pi_{r+1}$ of $A$ on $\cH_{r+1}$ and a finite-rank projection $p_{r+1}\in\B(\cH_{r+1})$
such that the pure matricial state $W(\cdot)=p_{r+1}\pi_{r+1}(\cdot)p_{r+1}$ satisfies
\begin{enumerate}
\item[(i)] $\|W(x)W(y)-W(xy)\|<\delta/2$ for $x,y\in\{a_1,\dots,a_n,b_1,\dots,b_{m+1},c_1,\dots,c_r\}$.
\item[(ii)]  $\|W(b_{m+1})\|>\|b_{m+1}\|-\delta$.
\end{enumerate}

If $\pi_{r+1}$ is not equivalent to any $\pi_k$, $k\leq r$, then we can set $V_k=U_k$ for $1\leq k\leq r$ and $V_{r+1}=W$,
and we are done (since $\delta<\epsilon$).  The difficulty comes when $\pi_{r+1}$ is equivalent to some $\pi_k$, say $\pi_r$
without loss of generality.  In this case, there is an isometry $I$ from $p_{r+1}\cH_{r+1}$ into $\cH_r$ such that
$W(\cdot)=I^*\pi_r(\cdot)I$.

If $\pi_r(A)\cap\cK(\cH_r)=\{0\}$, then (cf.\ \cite{ArvesonNotes}) there is a sequence of isometries $I_t$ from $p_{r+1}\cH_{r+1}$ to
$(1-p_r)\cH_r$ such that
$$W(x)=\lim_{t\to\infty} I_t^*\pi_r(x)I_t\mbox{ for }x\in\{a_1,\dots,a_n,b_1,\dots,b_{m+1}\}\,.$$
For sufficiently large $t$, we may take $V_r(\cdot)=(p_r+I_tI_t^*)\pi_r(\cdot)(p_r+I_tI_t^*)$ and $V_k=U_k$ for $1\leq k\leq r-1$.

The most difficult case is where $\pi_{r+1}$ is equivalent to $\pi_r$ and $\pi_r(A)$ contains $\cK(\cH_r)$.
If $q=II^*$, then we have
$$\|qp_r-p_r q\|=\|q\pi_r(c_r)-\pi(c_r)q\|<\delta\,.$$
By the following lemma, let $\tilde q$ be a projection in $\B(\cH_r)$ with $\tilde q p_k=p_k\tilde q$ and $\|\tilde q -q\|<3\delta$.
Set $\tilde p_r=p_r+\tilde q (1-p_r)$ and $V_r(\cdot)=\tilde p_r\pi(\cdot)\tilde p_r$, and $V_k=U_k$ for $1\leq k\leq r-1$.
These $V_k$ have the desired properties, completing the inductive step and thus the proof of \ref{Nonsep37}.

\begin{Lem}
Let $A$ be a C*-algebra, and $p$ and $q$ projections in $A$.  If $\|qp-pq\|<\epsilon<\frac{1}{4}$, then there is a projection $\tilde q\in A$
with $\|\tilde q -q\|<3\epsilon$ and $\tilde q p=p\tilde q$.  If $r=p+\tilde q (1-p)$, then for every $x\in A$ we have
$$\|rx-xr\|\leq 2\|xp-px\|+2\|xq-qx\|+12\epsilon\|x\|\,.$$
\end{Lem}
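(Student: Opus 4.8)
The plan is to first manufacture the commuting projection $\tilde q$ by a standard almost-commuting-projections argument, and then verify that $r$ is a projection and estimate $[r,x]$ directly from commutator identities. First I would form the ``diagonal part'' of $q$ relative to $p$, namely the self-adjoint element
$$h=pqp+(1-p)q(1-p)=q-pq-qp+2pqp\,,$$
which lies in $A$ (the second expression uses only products of $p$ and $q$, so no unit is needed) and commutes with $p$, since $hp=ph=pqp$. The ``off-diagonal'' remainder is $q-h=pq(1-p)+(1-p)qp$. The key computation is that $pq(1-p)=p(pq-qp)$ and $(1-p)qp=(qp-pq)p$, so each corner has norm at most $\|pq-qp\|<\epsilon$; writing $b=pq(1-p)$ we have $b^*=(1-p)qp$, $b^2=(b^*)^2=0$, and $bb^*\in pAp$, $b^*b\in(1-p)A(1-p)$ are orthogonal, whence $\|q-h\|=\|b+b^*\|=\|b\|<\epsilon$.

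Next I would pass to $\tilde q$ by continuous functional calculus. Since $\|h-q\|<\epsilon$ and $\mathrm{sp}(q)\subseteq\{0,1\}$, the spectrum of the self-adjoint element $h$ lies in $[-\epsilon,\epsilon]\cup[1-\epsilon,1+\epsilon]$, and because $\epsilon<1/4<1/2$ these two pieces are separated by $1/2$. Choose a continuous $f$ with $f(0)=0$, equal to $0$ on $[-\epsilon,\epsilon]$ and to $1$ on $[1-\epsilon,1+\epsilon]$, and set $\tilde q=f(h)$. Then $\tilde q\in A$ is a projection, it commutes with $p$ (being a norm-limit of polynomials in $h$, each commuting with $p$), and $|f(t)-t|\le\epsilon$ on $\mathrm{sp}(h)$ gives $\|\tilde q-h\|\le\epsilon$, so that $\|\tilde q-q\|\le\|\tilde q-h\|+\|h-q\|<2\epsilon<3\epsilon$.

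For the second part, note that $e:=\tilde q(1-p)=\tilde q-p\tilde q$ is a projection orthogonal to $p$ (self-adjoint, idempotent because $\tilde q$ commutes with $1-p$, and $pe=0$), so $r=p+e$ is a projection. To estimate the commutator I would expand
$$[r,x]=[p,x]+[\tilde q,x]-[p\tilde q,x]\,,\qquad [p\tilde q,x]=p[\tilde q,x]+[p,x]\tilde q\,,$$
which gives $\|[r,x]\|\le 2\|[p,x]\|+2\|[\tilde q,x]\|$. Finally $\|[\tilde q,x]\|\le\|[q,x]\|+\|[\tilde q-q,x]\|\le\|[q,x]\|+2\|\tilde q-q\|\,\|x\|<\|[q,x]\|+6\epsilon\|x\|$, and combining these (and using $\|[p,x]\|=\|xp-px\|$, $\|[q,x]\|=\|xq-qx\|$) yields the stated bound $2\|xp-px\|+2\|xq-qx\|+12\epsilon\|x\|$.

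The main obstacle is precisely the sharp estimate $\|q-h\|<\epsilon$: the naive triangle inequality applied to $pq(1-p)+(1-p)qp$ only yields $2\epsilon$, which would force $\mathrm{sp}(h)\subseteq[-2\epsilon,2\epsilon]\cup[1-2\epsilon,1+2\epsilon]$ and ultimately $\|\tilde q-q\|<4\epsilon$, too weak for the claimed $3\epsilon$. Exploiting the off-diagonal $2\times2$ structure relative to $p$, so that the two corners do not add, is therefore the essential point; everything else is routine bookkeeping with commutator identities and the functional calculus.
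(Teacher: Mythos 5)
Your proof is correct, and in its essentials it follows the same strategy as the paper: both arguments hinge on the observation that the off-diagonal remainder $q-h$, $h=pqp+(1-p)q(1-p)$, has norm $\|(1-p)qp\|<\epsilon$ rather than the naive $2\epsilon$ (the paper writes this as $\|q-[pqp+(1-p)q(1-p)]\|=\|(1-p)qp\|$, which is exactly your $\|b+b^*\|=\|b\|$), and both end with the identical commutator bookkeeping $\|rx-xr\|\le 2\|[p,x]\|+2\|[\tilde q,x]\|\le 2\|xp-px\|+2\|xq-qx\|+4\|x\|\,\|\tilde q-q\|$. The one place you genuinely diverge is in manufacturing $\tilde q$: the paper works cornerwise, estimating $\|pqp-(pqp)^2\|<\epsilon$ to locate $\sigma(pqp)$ near $\{0,1\}$ and producing separate projections in $pAp$ and $(1-p)A(1-p)$ each within $2\epsilon$ of the corresponding corner of $q$, whence $\|\tilde q-q\|<3\epsilon$; you instead apply a single functional calculus to $h$ itself, using the spectral perturbation $\|h-q\|<\epsilon$ to place $\sigma(h)$ in $[-\epsilon,\epsilon]\cup[1-\epsilon,1+\epsilon]$, which gives the slightly sharper $\|\tilde q-q\|<2\epsilon$ (still yielding the stated $12\epsilon\|x\|$ term, of course). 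Your route is marginally cleaner and avoids the almost-idempotent computation, and you correctly handle the non-unital case by insisting $f(0)=0$; the paper's cornerwise version has the mild advantage of exhibiting $\tilde q$ explicitly as a sum of projections in the two corners, but nothing downstream depends on that.
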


\begin{proof}
We may assume $A$ is unital.  We have
$$\|q-[pqp+(1-p)q(1-p)]\|=\|(1-p)qp\|=\|(qp-pq)p\|\leq\|qp-pq\|<\epsilon\,.$$
Also,
$$\|pqp-(pqp)^2\|=\|pqp-pqpqp\|=\|pq(qp-pq)p\|<\epsilon$$
and so $\sigma(pqp)\subseteq[1,\gamma]\cup[1-\gamma,1]$, where $\gamma=\frac{1-\sqrt{1-4\epsilon}}{2}<2\epsilon$ since $\epsilon<\frac{1}{4}$.
Thus by functional calculus there is a projection $r\in pAp$ with $\|r-pqp\|<\gamma<2\epsilon$.  Similarly, there is a projection $s\in (1-p)A(1-p)$
with $\|s-(1-p)q(1-p)\|<2\epsilon$.  If $\tilde q =r+s$, then
$$\|\tilde q -q\|\leq\|\tilde q -[pqp+(1-p)q(1-p)]\|+\|[pqp+(1-p)q(1-p)]-q\|<3\epsilon\,.$$
If $x\in A$, then
$$\|rx-xr\|\leq\|xp-px\|+\|x\tilde q -\tilde q x\|+\|xp\tilde q -p\tilde q x\|$$
$$\leq 2(\|xp-px\|+\|x\tilde q -\tilde q x\|)\leq 2\|xp-px\|+2(2\|x\|\|\tilde q -q\|+\|xq-qx\|)\,.$$
\end{proof}

\begin{Cor}\label{IndLimThm}
An arbitrary inductive limit (with injective connecting maps) of inner quasidiagonal C*-algebras is inner quasidiagonal.
\end{Cor}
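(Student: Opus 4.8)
The plan is to verify for $A$ the intrinsic condition of Proposition~\ref{Nonsep37}, which characterizes inner quasidiagonality for \emph{arbitrary} C*-algebras and therefore sidesteps any separability issue. Since the connecting maps are injective, the canonical maps $A_\lambda\to A$ are injective; so I would regard each $A_\lambda$ as a C*-subalgebra of $A=\limind A_\lambda$, with $\bigcup_\lambda A_\lambda$ dense in $A$ and with the family $\{A_\lambda\}$ directed by inclusion.

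To check the condition, fix $a_1,\dots,a_n,b\in A$ and $\epsilon>0$, and fix a tolerance $\epsilon'>0$ to be specified at the end. First I would use density and directedness to choose a single index $\lambda$ and elements $a_1',\dots,a_n',b'\in A_\lambda$ with $\|a_i-a_i'\|$ and $\|b-b'\|$ as small as we please. Since $A_\lambda$ is inner quasidiagonal, Proposition~\ref{Nonsep37} applied to $A_\lambda$ yields a pure matricial state $V_0\colon A_\lambda\to\M_k$ with $\|V_0(a_i')V_0(a_j')-V_0(a_i'a_j')\|<\epsilon'$ for all $i,j$ and $\|V_0(b')\|>\|b'\|-\epsilon'$.

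Next I would push $V_0$ up to $A$. By Remark~\ref{rems:pure-matr-state}(vi), applied to the inclusion $A_\lambda\subset A$, there is a pure matricial state $V\colon A\to\M_k$ restricting to $V_0$ on $A_\lambda$; in particular $V$ agrees with $V_0$ on $a_i'$, on $a_i'a_j'$, and on $b'$. It then remains to transport approximate multiplicativity and near-maximality of norm from the primed elements to the original ones. As $V$ is contractive, replacing $a_i$ by $a_i'$ changes $\|V(a_i)V(a_j)-V(a_ia_j)\|$ by an amount bounded linearly in $\|a_i-a_i'\|$, $\|a_j-a_j'\|$ and the norms $\|a_i\|,\|a_j\|$, while $\|V(b)\|\geq\|V_0(b')\|-\|b-b'\|$; a routine triangle-inequality estimate then gives $\|V(a_i)V(a_j)-V(a_ia_j)\|<\epsilon$ and $\|V(b)\|>\|b\|-\epsilon$ once $\epsilon'$ and the approximation errors are taken small enough. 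This establishes the condition of Proposition~\ref{Nonsep37} for $A$, so $A$ is inner quasidiagonal.

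The whole argument is essentially a formality once Proposition~\ref{Nonsep37} is available, and I do not expect a genuine obstacle. The one ingredient that does real work --- and the reason \ref{Nonsep37} was proved without assuming separability --- is the extendability of a pure matricial state along the subalgebra inclusion $A_\lambda\subset A$ furnished by Remark~\ref{rems:pure-matr-state}(vi), together with the fact that the characterization in \ref{Nonsep37} is purely local, referring only to finitely many elements at a time, and so applies verbatim to the possibly nonseparable limit $A$.
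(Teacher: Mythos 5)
Your proposal is correct and follows essentially the same route as the paper: approximate the given elements by elements of one $A_\lambda$, apply the condition of Proposition \ref{Nonsep37} (equivalently Lemma \ref{fl1}) in $A_\lambda$, extend the resulting pure matricial state to $A$ via Remark \ref{rems:pure-matr-state}(vi), and conclude with a triangle-inequality estimate and the converse direction of \ref{Nonsep37}. The paper's proof is the same argument with the explicit choice $\delta=\epsilon/3\max(1,\|a_1\|,\dots,\|a_n\|,\|b\|)$.
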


\begin{proof}
Let $A=\limind(A_i,\phi_{ij})$, with each $A_i$ inner quasidiagonal.  Regard each $A_i$ as a C*-subalgebra of $A$.  If $a_1,\dots,a_n,b\in A$
and $\epsilon>0$, fix an $A_i$ and elements $\tilde a_1,\dots,\tilde a_n,\tilde b \in A_i$ with $\|a_j-\tilde a_j\|<\delta$ for each $j$
and $\|b-\tilde b\|<\delta$, where $\delta=\epsilon/3\max(1,\|a_1\|,\dots,\|a_n\|,\|b\|)$.
Let $V$ be a pure matricial state of $A_i$ such that $\|V(\tilde a_j)V(\tilde a_k )-V(\tilde a_j\tilde a_k)\|<\delta$
for all $j,k$, and $\|V(\tilde b )\|>\|\tilde b \|-\delta$.  Extend $V$ to a pure matricial state $W$ on $A$.  Then
$\|W(a_j)W(a_k)-W(a_ja_k)\|<\epsilon$ for all $j,k$, and $\|W(b)\|>\|b\|-\epsilon$.  Thus $A$ is inner quasidiagonal by \ref{Nonsep37}.
\end{proof}

\section{Permanence Properties}

We finish by recording some other permanence properties of the class of inner quasidiagonal C*-algebras.  The first one is an easy
consequence of the definition of inner quasidiagonality, and could have been noted in \cite{BlackadarKInner}.

\begin{Prop}\label{TensProd}
The minimal tensor product of inner quasidiagonal C*-algebras is inner quasidiagonal.
\end{Prop}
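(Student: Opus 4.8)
The plan is to deduce this from our main characterization rather than to wrestle with the definition directly. The heart of the matter is the separable case, to which Theorem \ref{MThm} applies; the general case then follows by a routine inductive-limit reduction. For the reduction I would invoke \cite[3.6]{BlackadarKInner} to write $A=\limind A_i$ and $B=\limind B_j$ as inductive limits (with injective connecting maps) of \emph{separable} inner quasidiagonal C*-subalgebras. Since the minimal tensor product is injective, each $A_i\otimes_{\min}B_j$ embeds isometrically in $A\otimes_{\min}B$, and $\bigcup_{i,j}A_i\odot B_j$ is dense; hence $A\otimes_{\min}B=\limind(A_i\otimes_{\min}B_j)$ with injective connecting maps. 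Each $A_i\otimes_{\min}B_j$ is separable, so once the separable case is known it is inner quasidiagonal, and Corollary \ref{IndLimThm} then gives that $A\otimes_{\min}B$ is inner quasidiagonal.

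For the separable case, assume $A$ and $B$ are separable and inner quasidiagonal. By Theorem \ref{MThm}, $A$ has a separating family $\{\pi_\alpha\}$ of quasidiagonal irreducible representations and $B$ has a separating family $\{\rho_\beta\}$. I would then consider the family $\{\pi_\alpha\otimes\rho_\beta\}$ of representations of $A\otimes_{\min}B$ and check three things. First, each $\pi_\alpha\otimes\rho_\beta$ is irreducible: by the commutation theorem $(\pi_\alpha\otimes\rho_\beta)(A\otimes_{\min}B)'=\pi_\alpha(A)'\,\bar\otimes\,\rho_\beta(B)'=\C\,\bar\otimes\,\C=\C$, exactly as in Remark \ref{rems:pure-matr-state}(v). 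Second, the family is separating, since $\bigoplus_\alpha\pi_\alpha$ and $\bigoplus_\beta\rho_\beta$ are faithful and hence so is their minimal tensor product. Third---the one genuine technical point---each $\pi_\alpha\otimes\rho_\beta$ is quasidiagonal.

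For the quasidiagonality of $\pi_\alpha\otimes\rho_\beta$ I would use the standard commutator estimate for elementary tensors. If $P$ is a finite-rank projection almost commuting with $\pi_\alpha(A)$ on a given finite set and $Q$ is one almost commuting with $\rho_\beta(B)$, then $P\otimes Q$ is finite-rank and
$$\|[P\otimes Q,\pi_\alpha(a)\otimes\rho_\beta(b)]\|\leq\|[P,\pi_\alpha(a)]\|\,\|b\|+\|a\|\,\|[Q,\rho_\beta(b)]\|.$$
Taking $P\to1$ and $Q\to1$ along the quasidiagonalizing sequences (on the diagonal) produces finite-rank projections increasing to $1$ and almost commuting with each $\pi_\alpha(a)\otimes\rho_\beta(b)$; approximate commutation with arbitrary elements of $(\pi_\alpha\otimes\rho_\beta)(A\otimes_{\min}B)$ follows by density of the span of elementary tensors. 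Thus $\pi_\alpha\otimes\rho_\beta$ is quasidiagonal, and, being irreducible, it has $P\otimes Q\in(\pi_\alpha\otimes\rho_\beta)(A\otimes_{\min}B)''=\L(\cH)$. Since $A\otimes_{\min}B$ now has a separating family of quasidiagonal irreducible representations, it is inner quasidiagonal (the easy implication noted in the Introduction), completing the separable case.

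The step I expect to be the real obstacle---and the reason this is not simply ``obvious from the definition''---is the norm-attainment requirement. The naive attempt would fix inner-quasidiagonal data $(\pi_A,P_A)$ for $A$ and $(\pi_B,P_B)$ for $B$ and use $\sigma=\pi_A\otimes\pi_B$ with $P=P_A\otimes P_B$: the membership $P\in\sigma(A\otimes_{\min}B)''$ and the almost-commutation estimate are immediate, but the lower bound $\|P\sigma(w)P\|>\|w\|-\epsilon$ fails for a general (non-elementary) $w=\sum_l a_l\otimes b_l$, because the finite-rank $P$ coming from inner quasidiagonality need not contain the vector at which $\sigma(w)$ nearly attains its norm. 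Aligning a finite-rank almost-invariant projection with a prescribed near-maximal vector is possible precisely when the representation is quasidiagonal, which is exactly the content extracted from Theorem \ref{MThm}; this is why I route the argument through the separating family of quasidiagonal irreducible representations rather than through the definition directly.
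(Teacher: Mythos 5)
Your proof is correct, but it takes a genuinely different --- and considerably longer --- route than the paper. The paper's own proof of \ref{TensProd} is precisely the ``naive attempt'' you describe in your last paragraph: approximate each $z_k\in Z$ by $\sum_j x_{jk}\otimes y_{jk}$, apply the definition of inner quasidiagonality to $E=\{x_{jk}\}$ and $F=\{y_{jk}\}$ to obtain $(\pi,P)$ and $(\rho,Q)$, and take $(\pi\otimes\rho,\,P\otimes Q)$; the whole argument is three sentences and, as the paper notes, requires no separability. Your objection to that argument is well taken: the commutator estimate and the membership $P\otimes Q\in(\pi\otimes\rho)(A\otimes_{\min}B)''$ are immediate, but the lower bound $\|(P\otimes Q)(\pi\otimes\rho)(z_k)(P\otimes Q)\|>\|z_k\|-\epsilon$ does not follow from norming the individual tensor factors. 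Indeed, for $A=B=C[0,1]$, $f_1(s)=\min(2s,1)$, $f_2(s)=\min(2-2s,1)$ and $z=f_1\otimes f_1+f_2\otimes f_2$ (so $\|z\|=2$, attained only at $(1/2,1/2)$), the representation $\mathrm{ev}_{1/4}\oplus\mathrm{ev}_{3/4}$ with $P=1$ is valid data for $E=\{f_1,f_2\}$ for every $\epsilon>0$, yet the compressed tensor product has norm $5/4$; so the paper's argument, read literally, has a gap at exactly the point you flag. Your route --- reduce to the separable case via \cite[3.6]{BlackadarKInner} and Corollary \ref{IndLimThm}, then tensor the separating families of quasidiagonal irreducible representations supplied by Theorem \ref{MThm} --- closes that gap, since quasidiagonalizing projections converge strongly to $1$ and hence eventually see the norming vector of any fixed element of $A\otimes_{\min}B$; your three verifications (irreducibility, separation, and quasidiagonality of $\pi_\alpha\otimes\rho_\beta$) are all standard and correct. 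The trade-off is that you invoke the full strength of \ref{MThm} plus the inductive-limit reduction, whereas a repaired direct argument (for instance, tensoring separating families of representations that each carry quasidiagonalizing projections converging strongly to $1$ inside their own double commutants) would keep the result independent of the main theorem and of any separability considerations.
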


\begin{proof}
If $A$ and $B$ are inner quasidiagonal and $Z=\{z_1,\dots,z_n\}\subseteq A\otimes_{\min}B$, approximate $z_k$ by an element $\sum_{j=1}^{n_k}x_{jk}\otimes y_{jk}$
of the algebraic tensor product $A\odot B$.
Let $E=\{x_{jk}\}$ and $F=\{y_{jk}\}$.  If $(\pi,P)$ and $(\rho,Q)$ are representations of $A$ and $B$ with projections as in the definition for $E$ and $F$
with sufficiently small $\epsilon$, then $(\pi\otimes\rho,P\otimes Q)$ will be the desired representation for $A\otimes_{\min}B$ and $Z$.
\end{proof}

It is doubtful that the result holds for maximal tensor products.  Note that no separability hypothesis is necessary in \ref{TensProd}.

The next property is an immediate consequence of Theorem \ref{MThm}, and essentially generalizes \cite[3.10]{BlackadarKInner}:

\begin{Prop}
The algebra of sections of a continuous field of separable inner quasidiagonal C*-algebras is inner quasidiagonal.
\end{Prop}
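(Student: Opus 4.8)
The plan is to produce, directly, a separating family of quasidiagonal irreducible representations of the section algebra $A=\Gamma_0(X,(A_x))$, and then to invoke the (elementary, separability-free) implication, noted in the introduction, that any C*-algebra possessing such a family is inner quasidiagonal. The decisive point is that although Theorem \ref{MThm} is stated for separable algebras and $A$ itself may well fail to be separable, the substantial content of \ref{MThm} is needed only on the \emph{fibers} $A_x$, which are separable by hypothesis; for $A$ one uses only the trivial direction.

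First I would recall the standard continuous-field facts: for every $x\in X$ the evaluation map $\mathrm{ev}_x\colon A\to A_x$ is a surjective *-homomorphism, and $\|a\|=\sup_{x\in X}\|a(x)\|$, so a section $a$ is nonzero precisely when $a(x)\neq 0$ for some $x$. Applying Theorem \ref{MThm} to each separable inner quasidiagonal fiber $A_x$, I obtain a separating family $\mathcal F_x$ of quasidiagonal irreducible representations of $A_x$.

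Next, for $\pi\in\mathcal F_x$ I would form $\tilde\pi:=\pi\circ\mathrm{ev}_x$, a representation of $A$ on the same Hilbert space as $\pi$. Since $\mathrm{ev}_x$ is surjective we have $\tilde\pi(A)=\pi(A_x)$, so $\tilde\pi$ has the same image operator algebra as $\pi$; hence $\tilde\pi$ is irreducible, and it is quasidiagonal because any increasing net of finite-rank projections converging strongly to $1$ and asymptotically commuting with $\pi(A_x)$ witnesses quasidiagonality of $\tilde\pi(A)=\pi(A_x)$ just as well. The family $\{\tilde\pi : x\in X,\ \pi\in\mathcal F_x\}$ then separates $A$: given $a\neq0$, pick $x$ with $a(x)\neq0$ and then $\pi\in\mathcal F_x$ with $\pi(a(x))\neq0$, so that $\tilde\pi(a)=\pi(a(x))\neq0$.

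Finally I would conclude that $A$ has a separating family of quasidiagonal irreducible representations and is therefore inner quasidiagonal. The one genuine subtlety — and the step I would be most careful about — is precisely this passage between the fibers and the (possibly nonseparable) section algebra: it is essential to apply \ref{MThm} only to the separable $A_x$, while on $A$ one uses only the implication ``separating family of quasidiagonal irreducibles $\Rightarrow$ inner quasidiagonal,'' which is immediate from the definition and requires no separability. Everything else — surjectivity of evaluation, the supremum formula for the norm, and the stability of irreducibility and quasidiagonality of a representation under precomposition with a surjective *-homomorphism — is routine continuous-field bookkeeping.
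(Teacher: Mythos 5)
Your proof is correct and follows exactly the paper's argument: apply Theorem \ref{MThm} to each separable fiber to get separating families of quasidiagonal irreducible representations, pull them back along the (surjective) evaluation maps, and conclude via the elementary implication that a separating family of quasidiagonal irreducible representations gives inner quasidiagonality. You simply spell out more of the routine continuous-field bookkeeping than the paper does.
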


\begin{proof}
Let $\langle A(t)\rangle$ be a continuous field of separable continuous trace C*-algebras over a locally compact Hausdorff space $X$, and
$A$ the C*-algebra of continuous sections vanishing at infinity.  Each fiber $A(t)$ has a separating family of quasidiagonal irreducible
representations by \ref{MThm}, so by composition with the fiber maps from $A$ to the $A(t)$, $A$ also has a separating family of quasidiagonal
irreducible representations, hence is inner quasidiagonal.
\end{proof}

Any C*-subalgebra of a quasidiagonal C*-algebra is quasidiagonal.  This is false for inner quasidiagonality: if $A$ is an NF algebra
which is not strong NF (cf.\ \cite[5.6]{BlackadarKInner}), let $\pi$ be a faithful quasidiagonal representation of $A$ on a Hilbert space $\cH$;
then $\pi(A)+\cK(\cH)$ is inner quasidiagonal (by \ref{MThm} or \cite[5.8]{BlackadarKInner}), in fact strong NF, but the C*-subalgebra $\pi(A)$ is not inner quasidiagonal.  But we have:

\begin{Prop}\label{InnQDSI}
Inner quasidiagonality is an (SI) property in the sense of \cite[II.8.5]{BlackadarOperator}.
\end{Prop}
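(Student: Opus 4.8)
The plan is to check the two clauses in the definition of a separably inheritable property \cite[II.8.5]{BlackadarOperator}: \textbf{(a)} if $A$ is inner quasidiagonal and $B\subseteq A$ is a separable C*-subalgebra, then there is a separable inner quasidiagonal $C$ with $B\subseteq C\subseteq A$; and \textbf{(b)} if $A$ is the closure of an increasing union of inner quasidiagonal subalgebras, then $A$ is inner quasidiagonal. Clause \textbf{(b)} is immediate from Corollary \ref{IndLimThm}, since such an increasing union is an inductive limit with injective connecting maps. So all the content lies in clause \textbf{(a)}, and the characterization of Proposition \ref{Nonsep37} (valid for arbitrary $A$) is the natural tool: I will build $C$ so that for every finite $F\subseteq C$, every $b\in F$ and every tolerance there is a \emph{pure} matricial state of $C$ that is approximately multiplicative on $F$ and nearly norm-attaining at $b$.

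Such pure matricial states will be produced by restricting pure matricial states of $A$, which exist in abundance by Proposition \ref{Nonsep37} since $A$ is inner quasidiagonal. Here lies the one real difficulty: the restriction of a pure matricial state to a subalgebra need not be pure. If $V=I^{*}\pi(\cdot)I$ with $\pi$ an irreducible representation of $A$ on $\cH$ and $I\colon\C^{n}\to\cH$ an isometry, then $V|_{C}=I^{*}(\pi|_{C})(\cdot)I$, and $V|_{C}$ is a pure matricial state of $C$ precisely when $\pi|_{C}$ acts irreducibly on the cyclic subspace $\cH_{0}:=\overline{\pi(C)I\C^{n}}$. The naive fix---arranging that $V|_{C}$ surject onto $\M_{n}$ with generators of $\M_n$ in the multiplicative domain---does \emph{not} force purity: the map $\M_{n}\oplus\M_{n}\to\M_{n}$, $x\oplus y\mapsto\tfrac12(x+y)$, has its diagonal in the multiplicative domain and is onto, yet is not a pure matricial state. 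Irreducibility of the restricted action must instead be secured by hand.

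I would secure it by a L\"owenheim--Skolem closing-up with an internal back-and-forth, in the spirit of the classical proof that a separable prime C*-algebra is primitive (the argument the authors allude to at the end of Section 4). Set $C_{0}=B$. Having built a separable $C_{m}$, enumerate the countably many tasks $(F,b,1/l)$ coming from a fixed countable dense subset of $C_{m}$; for each task fix, once and for all, a pure matricial state $V=V_{F,b,l}$ of $A$ that is $1/l$-multiplicative on $F$ with $\|V(b)\|>\|b\|-1/l$, together with its irreducible representation $\pi_{V}$ and isometry $I_{V}$. For each such $V$ maintain a countable set $T_{V}\subseteq\cH_{V}$ dense in the \emph{current} cyclic subspace $\overline{\pi_{V}(C_{m})I_{V}\C^{n}}$, and, using Kadison transitivity on the irreducible $\pi_{V}$, adjoin to $C_{m+1}$ elements $a\in A$ that move each prescribed vector of $T_{V}$ to within $1/l$ of each other prescribed vector. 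Dovetailing over the countably many active tasks keeps every $C_{m+1}$ separable, and in the limit $C=\overline{\bigcup_{m}C_{m}}$ each chosen $\pi_{V}(C)$ acts topologically irreducibly on $\overline{\pi_{V}(C)I_{V}\C^{n}}$; hence each $V|_{C}$ is a genuine pure matricial state of $C$. Since $V|_{C}$ inherits the approximate multiplicativity and the norm estimate of $V$, an $\epsilon/3$-approximation shows that $C$ satisfies the condition of Proposition \ref{Nonsep37}, so $C$ is inner quasidiagonal, proving \textbf{(a)}.

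The main obstacle, as indicated, is exactly the failure of purity under restriction, compounded by the fact that for nonseparable $\cH_{V}$ no separable $C$ can make $\pi_{V}|_{C}$ irreducible on all of $\cH_{V}$; the resolution is to demand irreducibility only on the (separable) cyclic subspace generated by $I_{V}\C^{n}$ and to restore it through the back-and-forth, where the delicate point is that adjoining transitivity witnesses enlarges that subspace and so must be iterated to a fixed point. Everything else---the bookkeeping of tasks, the separability of each $C_{m}$, and the transfer of the estimates from $V$ to $V|_{C}$---is routine.
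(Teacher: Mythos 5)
Your decomposition into the two clauses of \cite[II.8.5]{BlackadarOperator} is the right frame, and your treatment of the inductive-limit clause via Corollary \ref{IndLimThm} is exactly what the paper does. Where you diverge is the separable-inheritability clause: the paper disposes of it in one line by citing \cite[3.6]{BlackadarKInner}, where the L\"owenheim--Skolem closing-up for inner quasidiagonality was already carried out (directly from the definition, i.e.\ working with finite-rank projections in $\pi(A)''$), whereas you reprove it from scratch inside the framework of the present paper, using the pure matricial state characterization of Proposition \ref{Nonsep37} together with a Kadison-transitivity back-and-forth to repair the failure of purity under restriction. Your route is genuinely different and self-contained; in particular you correctly identify the real obstruction (restriction of a pure matricial state to a subalgebra need not be pure, and surjectivity plus a large multiplicative domain does not rescue purity, as your $\M_n\oplus\M_n\to\M_n$ averaging example shows), and your remedy --- demanding topological irreducibility only on the separable cyclic subspace $\overline{\pi_V(C)I_V\C^n}$ and restoring it by iterating transitivity witnesses to a fixed point --- is the same mechanism as the classical proof that separable prime C*-algebras are primitive, which the authors themselves invoke elsewhere. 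What your approach buys is independence from the earlier paper and a proof that runs entirely through the new matricial-state machinery; what it costs is length and a number of standard but unverified details (norm control in the Kadison transitivity witnesses so that approximate transitivity on the dense sets $T_V$ passes to all vectors, the nesting of the $T_V$ across stages, the degenerate/non-unital bookkeeping ensuring $I_V\C^n$ lies in the cyclic subspace). None of these is a gap in substance, but if you intend this as a full proof rather than a sketch, the back-and-forth paragraph is where the details need to be written out; alternatively, you could simply observe that clause (a) is \cite[3.6]{BlackadarKInner} verbatim and reserve your argument as an independent derivation.
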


\begin{proof}
This is just a combination of \ref{IndLimThm} and \cite[3.6]{BlackadarKInner}.
\end{proof}

\bibliographystyle{alpha}

\def\cprime{$'$} \def\cprime{$'$}

\end{document}